\documentclass[11pt,english,a4paper]{article}
\usepackage[centertags]{amsmath}
\usepackage{amssymb}
\usepackage{amsthm}
\usepackage{makeidx}

\usepackage[margin=1.5in]{geometry}
\usepackage{float}
\usepackage{lscape}
\usepackage{graphicx}
\usepackage{amsmath}
\usepackage[table]{xcolor}

\newtheorem{theorem}{Theorem}[section]
\newtheorem{corollary}[theorem]{Corollary}
\newtheorem{lemma}[theorem]{Lemma}
\newtheorem{proposition}[theorem]{Proposition}
\newtheorem{definition}[theorem]{Definition}
\newtheorem{remark}[theorem]{Remark}

\numberwithin{equation}{section} 

\begin{document}

\title{ On differential relations of $2$-orthogonal polynomials }

\author{T. A. Mesquita\footnote{Corresponding author (taugusta.mesquita@gmail.com)} }
\date{}
\maketitle

\begin{center}
{\scriptsize
Escola Superior de Tecnologia e Gest\~ao, Instituto Polit{\'e}cnico de Viana do Castelo, Rua Escola Industrial e Comercial de Nun' \'Alvares, 4900-347, Viana do Castelo, Portugal, \&\\
Centro de Matem\'{a}tica da Universidade do Porto, Rua do Campo Alegre, 687, 4169-007 Porto, Portugal\\
}
\end{center}

\begin{abstract}

A generic differential operator on the vectorial space of polynomial functions was presented in \cite{PM-TAM-2019}  and applied in the study of differential relations fulfilled by polynomial sequences either orthogonal or $2$-orthogonal.

Using the techniques therein developed, we prove an identity fulfilled by different differential operators and apply it in a systematic approach to the problem of finding polynomial eigenfunctions, assuming that those polynomials constitute a $2$-orthogonal polynomial sequence.  

In particular, we analyse  a third order differential operator that does not increase the degree of polynomials.
\end{abstract}

 \textbf{Keywords and phrases}: $d$-orthogonal polynomials,\, differential operators, \, 2-orthogonal polynomials,\, Hahn's property.\\
 \textbf{2010 Mathematics Subject Classification}: 42C05  \,,\,  33C45   \,,\,  68W30  \,,\,  33-04  \,,\,  34L10

\section{Notation and basic concepts}

Let $\mathcal{P}$ be the vector space of polynomials with coefficients in $\mathbb{C}$ and let
$\mathcal{P}^{\prime}$ be its topological dual space. We denote by $\langle u ,p\rangle$ the action of the form or linear functional $u \in\mathcal{P}^{\prime }$ on $p\in\mathcal{P}$. In particular,
$\langle u, x^{n}\rangle:=\nolinebreak\left( u \right) _{n},n\geq 0$ represent the moments of $u$.
In the following, we will call polynomial sequence (PS) to any sequence
${\{P_{n}\}}_{n \geq 0}$ such that $\deg P_{n}= n,\; n \geq 0$, that is, for all non-negative integer.
We will also call monic polynomial sequence (MPS) to a PS so that all polynomials have leading coefficient equal to one.

If ${\{P_{n}\}}_{n \geq 0}$ is a MPS, there exists a unique sequence
$\{u_n\}_{n\geq 0}$, $u_n\in\mathcal{P}^{\prime}$, called the dual sequence of $\{P_{n}\}_{n\geq 0}$, such that,
\begin{equation}\label{SucDual}
<u_{n},P_{m}>=\delta_{n,m}\ , \ n,m\ge 0.
\end{equation}

%
\noindent On the other hand, given a MPS ${\{P_{n}\}}_{n \geq 0}$, the expansion of $xP_{n+1}(x)$, defines sequences in $\mathbb{C}$, ${\{\beta_{n}\}}_{n \geq 0}$ and
$\{\chi_{n,\nu}\}_{0 \leq \nu \leq n,\; n \geq 0},$ such that
\begin{eqnarray}
\label{divisao_ci} &&P_{0}(x)=1, \;\; P_{1}(x)=x-\beta_{0},\\
\label{divisao} && xP_{n+1}(x)= P_{n+2}(x)+ \beta_{n+1}P_{n+1}(x)+\sum_{\nu=0}^{n}\chi_{n,\nu}P_{\nu}(x).
\end{eqnarray}
This relation is usually called the structure relation of  ${\{P_{n}\}}_{n \geq 0}$, and ${\{\beta_{n}\}}_{n \geq 0}$ and
$\{\chi_{n,\nu}\}_{0 \leq \nu \leq n,\; n \geq 0}$ are called the structure coefficients (SCs) \cite{theoriealgebrique}. Another useful presentation is the folllowing.
\begin{align*}
&P_{n+2}(x) = (x-\beta_{n+1})P_{n+1}(x)+\sum_{\nu=0}^{n}\chi_{n,\nu}P_{\nu}(x)\, ,\\
&P_{0}(x)=1, \;\; P_{1}(x)=x-\beta_{0}\,.
\end{align*}

When the structure coefficients fulfil $\chi_{n,\nu}=0\; ,\; 0 \leq \nu \leq n-1$ , $\chi_{n,n} \neq 0$, identities \eqref{divisao_ci}-\eqref{divisao} refer to the well known three-term recurrence associated to an orthogonal MPS. More generally, identity \eqref{divisao} may furnish a recurrence relation for a higher order
corresponding to the following notion of orthogonality with respect to $d$ given functionals.

\medskip
\begin{definition}\cite{Douak2classiques,Maroni-Toulouse,Van-Iseg-1987}
Given $\Gamma^{1}, \Gamma^{2},\ldots, \Gamma^{d} \in \mathcal{P}^{\prime}$, $d \geq 1$, the polynomial sequence ${\{P_{n}\}}_{n \geq 0}$ is called d-orthogonal polynomial sequence (d-OPS) with respect to  $\Gamma=(\Gamma^{1},\ldots, \Gamma^{d} )$ if it fulfils 
\begin{equation}\label{d ortogonal}
 \langle \Gamma^{\alpha}, P_{m}P_{n}  \rangle=0, \;\; n \geq md +\alpha,\,\; m \geq 0,
\end{equation}
 \begin{equation}\label{d ortogonal regular}
 \langle \Gamma^{\alpha}, P_{m}P_{md+\alpha-1}  \rangle \neq 0, \;\; m \geq 0,
\end{equation}
for each integer $\alpha=1,\ldots, d$.

%
\begin{lemma}\cite{variations}\label{lema1}
For each $u \in \mathcal{P}^{\prime}$ and each $m \geq 1$, the two following propositions are equivalent.
\begin{description}
\item[a)]  $\langle u, P_{m-1} \rangle \neq 0,\:\: \langle u, P_{n} \rangle=0,\: n \geq m$.
\item[b)] $\exists \lambda_{\nu} \in \mathbb{C},\:\: 0 \leq \nu \leq m-1,\:\: \lambda_{m-1}\neq 0$ such that
$u=\sum_{\nu=0}^{m-1}\lambda_{\nu} u_{\nu}$. \end{description}
\end{lemma}

The conditions (\ref{d ortogonal}) are called the d-orthogonality conditions and the conditions (\ref{d ortogonal regular}) are called the regularity conditions. In this case, the functional $\Gamma$, of dimension $d$, is said regular.
\end{definition}
The  $d$-dimensional functional $\Gamma$ is not unique. Nevertheless, from Lemma \ref{lema1}, we have:
$$\Gamma^{\alpha}=\sum_{\nu=0}^{\alpha-1}\lambda_{\nu}^{\alpha} u_{\nu}, \;\; \lambda_{\alpha-1}^{\alpha} \neq 0, \; 1\leq \alpha \leq d.$$
Therefore, since $U=(u_{0},\ldots, u_{d-1} )$ is unique, we use to consider the canonical functional of dimension $d$, $U=(u_{0},\ldots, u_{d-1} )$, saying that
${\{P_{n}\}}_{n \geq 0}$ is d-orthogonal (for any positive integer $d$) with respect to
$U=(u_{0},\ldots, u_{d-1})$ if
$$ \langle u_{\nu}, P_{m}P_{n}  \rangle=0, \;\; n \geq md +\nu+1,\,\; m \geq 0,$$
$$  \langle u_{\nu}, P_{m}P_{md+\nu}  \rangle \neq 0, \;\; m \geq 0,$$
for each integer $\nu=0,1,\ldots, d-1$.

\begin{theorem}\cite{Maroni-Toulouse} \label{recurrence relation for d ortho}
Let ${\{P_{n}\}}_{n \geq 0}$ be a MPS. The following assertions are equivalent:
\begin{description}
\item[a)]  ${\{P_{n}\}}_{n \geq 0}$ is $d$-orthogonal with respect to $U=(u_{0},\ldots, u_{d-1})$.
\item[b)] ${\{P_{n}\}}_{n \geq 0}$ satisfies a $(d+1)$-order recurrence relation ($d \geq 1$):
$$P_{m+d+1}(x)=(x-\beta_{m+d})P_{m+d}(x)-\sum_{\nu=0}^{d-1}\gamma_{m+d-\nu}^{d-1-\nu}P_{m+d-1-\nu}(x), \:\: m \geq 0,$$
with initial conditions
$$P_{0}(x)=1,\:\: P_{1}(x)=x-\beta_{0}\;\: \textrm{and if }d \geq 2:$$
$$P_{n}(x)=(x-\beta_{n-1})P_{n-1}(x)-\sum_{\nu=0}^{n-2}\gamma_{n-1-\nu}^{d-1-\nu}P_{n-2-\nu}(x), \:\: 2 \leq n \leq  d,$$
and regularity conditions: $\gamma_{m+1}^{0} \neq 0,\: \; m \geq 0.$
\end{description}
\end{theorem}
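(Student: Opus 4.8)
The plan is to phrase everything in terms of the structure relation \eqref{divisao} and its coefficients $\chi_{n,\nu}$, and to notice first that assertion \textbf{b)} is merely a re-indexing of the following property:
\begin{equation*}
\chi_{n,\nu}=0\ \text{ for }\ 0\le\nu\le n-d,\qquad \chi_{n,n-d+1}\neq 0\ \text{ for }\ n\ge d-1.
\end{equation*}
Indeed, putting $n+1=m+d$ in \eqref{divisao}, setting $\gamma_{m+d-\nu}^{d-1-\nu}:=\chi_{m+d-1,m+d-1-\nu}$ and relabelling the now-finite sum produces the $(d+1)$-order recurrence of \textbf{b)}, with $\gamma_{m+1}^{0}$ corresponding exactly to $\chi_{m+d-1,m}$; the short relations for $2\le n\le d$ are the untruncated instances of \eqref{divisao}. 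Thus it is enough to prove that \textbf{a)} is equivalent to the displayed property of the $\chi_{n,\nu}$.

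For \textbf{a)}$\Rightarrow$ the property: applying $u_\nu$ to \eqref{divisao} and using \eqref{SucDual} gives $\chi_{n,\nu}=\langle u_\nu,xP_{n+1}\rangle$ for $0\le\nu\le n$; since $x=P_1+\beta_0$ and $\langle u_\nu,P_{n+1}\rangle=0$ this equals $\langle u_\nu,P_1P_{n+1}\rangle$. The $d$-orthogonality conditions in their $m=1$ instance say $\langle u_\nu,P_1P_{n+1}\rangle=0$ as soon as $n+1\ge d+\nu+1$, i.e. $\nu\le n-d$, while the regularity condition with $m=1$, namely $\langle u_\nu,P_1P_{d+\nu}\rangle\neq 0$, yields $\chi_{n,n-d+1}\neq 0$ for $n\ge d-1$.

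For the converse I would first transpose \eqref{divisao}: computing $\langle xu_j,P_m\rangle=\langle u_j,xP_m\rangle$ and using $\chi_{m-1,\mu}=0$ for $\mu<m-d$ gives the finite expansion
\begin{equation*}
xu_j=u_{j-1}+\beta_ju_j+\sum_{k=j+1}^{j+d}\chi_{k-1,j}u_k,\qquad j\ge 0\quad(u_{-1}:=0).
\end{equation*}
Then I would prove by induction on $m$ the statement $S(m)$: \emph{for every $j\ge0$ one has $\langle u_j,P_mP_n\rangle=0$ whenever $n\ge md+j+1$, and $\langle u_j,P_mP_{md+j}\rangle\neq0$}; its restriction to $j\in\{0,\dots,d-1\}$, over all $m\ge0$, is exactly assertion \textbf{a)}. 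The base case $S(0)$ is immediate from \eqref{SucDual}. For the step, substitute the recurrence of \textbf{b)} for $P_{m+1}$ into $\langle u_j,P_{m+1}P_n\rangle$, push the factor $x$ onto $u_j$ with the displayed transposed relation, and apply $S(m)$ to the indices $j-1,j,\dots,j+d$; all the resulting terms vanish once $n\ge(m+1)d+j+1$, which is the orthogonality half of $S(m+1)$.

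The delicate point, and the step I expect to be the main obstacle, is the non-vanishing half of $S(m+1)$. Choosing $n=(m+1)d+j$, one checks that every term produced above vanishes by $S(m)$ except the single contribution $\chi_{j+d-1,j}\,\langle u_{j+d},P_mP_{md+(j+d)}\rangle$, arising from the top index $k=j+d$ in the transposed relation; this is a product of two nonzero factors — the second by $S(m)$, the first because $\chi_{j+d-1,j}$ is of the form $\chi_{n',n'-d+1}$ with $n'=j+d-1$. This is also why the induction hypothesis must be carried for \emph{all} $j\ge0$ rather than only for $j\le d-1$, since establishing $S(m+1)$ for one value of $j$ consumes $S(m)$ up to index $j+d$. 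The remaining work is the bookkeeping of which instances of the $d$-orthogonality and regularity conditions (always those with $m=1$ in one direction, and $S(m)$ in the other) are invoked at each step; the algebra itself is routine.
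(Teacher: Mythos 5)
The paper itself offers no proof of this theorem: it is quoted from \cite{Maroni-Toulouse}, so your argument can only be measured against the standard one. Your reduction of assertion \textbf{b)} to the property $\chi_{n,\nu}=0$ for $0\le\nu\le n-d$ together with $\chi_{n,n-d+1}\neq0$, and your proof of \textbf{b)}$\Rightarrow$\textbf{a)} --- transposing \eqref{divisao} into $xu_j=u_{j-1}+\beta_ju_j+\sum_{k=j+1}^{j+d}\chi_{k-1,j}u_k$ and running the induction on the strengthened statement $S(m)$ stated for all $j\ge0$ --- are sound; you should only phrase it as a complete induction, since the terms $\langle u_j,P_iP_n\rangle$ with $i<m$ produced by the lower part of the recurrence require $S(i)$, not $S(m)$.

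The genuine gap is in \textbf{a)}$\Rightarrow$\textbf{b)}. The orthogonality and regularity conditions of \textbf{a)} concern only $u_\nu$ with $0\le\nu\le d-1$, whereas your identity $\chi_{n,\nu}=\langle u_\nu,P_1P_{n+1}\rangle$ must be exploited for every $\nu\le n-d$ and at $\nu=n-d+1$; for $\nu\ge d$ the hypothesis contains no ``$m=1$ instance'' to invoke. As written, your argument proves only $\chi_{n,\nu}=0$ for $\nu\le\min(d-1,\,n-d)$ and $\chi_{n,n-d+1}\neq0$ for $n\le 2d-2$; already for $d=1$ it yields only $\chi_{n,0}=0$, far short of the three-term recurrence. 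What is missing is control of the higher dual functionals: from \textbf{a)} and Lemma \ref{lema1} applied to the forms $P_qu_\nu$ (which satisfy $P_qu_\nu=\sum_{i=0}^{qd+\nu}\lambda_iu_i$ with $\lambda_{qd+\nu}\neq0$) one first inverts this triangular system to write each $u_n$, $n\ge d$, as $\sum_{\mu=0}^{d-1}\phi^{(n)}_\mu u_\mu$ with $\deg\phi^{(n)}_\mu\le\lfloor(n-\mu)/d\rfloor$; only with these degree bounds does $\langle u_\nu,xP_{n+1}\rangle$ reduce, via the conditions of \textbf{a)} for general $m$, to zero exactly when $\nu\le n-d$ and to a nonzero quantity at $\nu=n-d+1$. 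Equivalently, you must first extend \textbf{a)} to your statement $S(m)$ for all $j\ge0$ before reading off the $\chi_{n,\nu}$; that extension is the substantive half of the implication and is absent from your two-sentence forward direction.
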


In this paper, we will focus on $2$-orthogonal MPSs, thus fulfilling the recurrence relation
%
\begin{align}
\nonumber &P_{n+3}(x)=(x-\beta_{n+2})P_{n+2}(x)-\gamma_{n+2}^{1}P_{n+1}(x)-\gamma_{n+1}^{0}P_{n}(x),\\
\nonumber & P_{0}(x)=1,\:\: P_{1}(x)=x-\beta_{0},\:\:
P_{2}(x)=(x-\beta_{1})P_{1}(x)-\gamma_{1}^{1}\, ,\, n \geq 0.
\end{align}
While working solely with $2$-orthogonality it is usual to rename the gammas as follows (cf. \cite{Douak-two-Laguerre})
\begin{align}
\label{RR2ortho} & P_{n+3}(x)=(x-\beta_{n+2})P_{n+2}(x)-\alpha_{n+2}P_{n+1}(x)-\gamma_{n+1}P_{n}(x),\\
\label{RR2ortho-ci} & P_{0}(x)=1,\:\: P_{1}(x)=x-\beta_{0},\:\:
P_{2}(x)=(x-\beta_{1})P_{1}(x)-\alpha_{1}\, ,\; \:n \geq 0.
\end{align}


\section{Differential operators on $\mathcal{P}$ and technical identities}\label{operator}

In this section, we list the main results indicated in \cite{PM-TAM-2019} that will be applied along the text. Later on, we also prove new identities that are the fundamental utensils for the strategy pursued. 
\medskip

Given a sequence of polynomials $\{a_{\nu}(x)\}_{ \nu \geq 0}$, let us consider the following linear mapping $J: \mathcal{P} \rightarrow \mathcal{P}$ (cf. \cite{PM-Korean}, \cite{Pincherle}).
\begin{equation} \label{operatorJ}
J= \sum_{\nu \geq 0} \frac{a_{\nu}(x)}{\nu!} D^{\nu},\quad \deg a_{\nu} \leq \nu,\quad \nu \geq 0.
\end{equation}
Expanding $a_{\nu}(x)$ as follows:
$$a_{\nu}(x)=\sum_{i=0}^{\nu} a_{i}^{[\nu]}x^{i}, $$
and recalling that $ D^{\nu}\left( \xi ^n \right) (x)= \frac{n!}{(n-\nu)!} x^{n-\nu} $, we get the next identities about $J$:
\begin{eqnarray}
\label{Jx^n-short}&& J\left( \xi^n \right) (x) = \sum_{\nu= 0}^{n} a_{\nu} (x) 
\binom{n}{\nu} x^{n-\nu}\, , \\
\label{Jx^n} && J\left( \xi^n \right) (x) = \sum_{\tau= 0}^{n} \left( \sum_{\nu=0}^{\tau}\binom{n}{n-\nu} a_{\tau-\nu}^{[n-\nu]} \right) x^{\tau},\quad n \geq 0.
\end{eqnarray}
Most in particular, a linear mapping $J$ is an isomorphism if and only if 
\begin{equation}\label{iso-conditions}
\deg \left( J\left( \xi^n  \right)(x) \right)= n\;, \;\; n \geq 0, \; \; \textrm{and}\;\; J\left( 1  \right)(x) \neq 0.
\end{equation}
The next result establishes that any operator that does not increase the degree admits an expansion as  \eqref{operatorJ} for certain polynomial coefficients.
\begin{lemma}\label{lemmaPascal} \cite{PM-TAM-2019}
For any linear mapping $J$, not increasing the degree, there exists a unique sequence of polynomials $\{a_{n}\}_{n \geq 0}$, with $\deg a_{n} \leq n$, so that $J$ is read as in \eqref{operatorJ}. Further, the linear mapping $J$ is an isomorphism of $\mathcal{P}$ if and only if
\begin{equation}\label{lambdan}
 \sum_{\mu=0}^{n}\binom{n}{\mu} a_{\mu}^{[\mu]} \neq 0 , \quad n \geq 0. 
 \end{equation}
\end{lemma}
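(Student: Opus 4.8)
The plan is to reduce everything to the action of $J$ on the monomial basis $\{\xi^{n}\}_{n\geq 0}$. First note that, since $\deg a_{\nu}\leq \nu$, an operator of the form \eqref{operatorJ} is well defined on all of $\mathcal{P}$: applied to a polynomial of degree $n$, only the summands with $\nu\leq n$ contribute. Hence such an operator is determined by its images of the monomials, and by \eqref{Jx^n-short} those images are $\sum_{\nu=0}^{n}\binom{n}{\nu}a_{\nu}(x)\,x^{n-\nu}$. Reading this as a triangular system in the unknowns $a_{0},a_{1},\dots$ shows that, if a representation \eqref{operatorJ} of $J$ exists, its coefficients are forced by the recursion
$$a_{n}(x)=J\left(\xi^{n}\right)(x)-\sum_{\nu=0}^{n-1}\binom{n}{\nu}a_{\nu}(x)\,x^{n-\nu}\ ,\quad n\geq 0,$$
with the empty sum (at $n=0$) giving $a_{0}=J(1)$; this proves uniqueness. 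For existence I would define $\{a_{n}\}_{n\geq 0}$ by this very recursion and verify by induction that $\deg a_{n}\leq n$: one has $\deg a_{0}=\deg J(1)\leq 0$ because $J$ does not increase the degree, and if $\deg a_{\nu}\leq\nu$ for $\nu<n$ then every term $a_{\nu}(x)x^{n-\nu}$ has degree $\leq n$ while $\deg J(\xi^{n})\leq n$, so $\deg a_{n}\leq n$. With these $a_{n}$, the construction guarantees $\sum_{\nu=0}^{n}\binom{n}{\nu}a_{\nu}(x)x^{n-\nu}=J(\xi^{n})(x)$ for every $n$, hence by \eqref{Jx^n-short} the operator $\sum_{\nu\geq 0}\frac{a_{\nu}(x)}{\nu!}D^{\nu}$ agrees with $J$ on all monomials, and by linearity the two operators coincide on $\mathcal{P}$.

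For the isomorphism criterion I would invoke \eqref{iso-conditions}: $J$ is an isomorphism of $\mathcal{P}$ if and only if $\deg\big(J(\xi^{n})(x)\big)=n$ for all $n\geq 0$ — the requirement $J(1)\neq 0$ being subsumed as the case $n=0$, since $J(1)=a_{0}$ is a constant. Having already established $\deg J(\xi^{n})\leq n$, this is equivalent to saying that the coefficient of $x^{n}$ in $J(\xi^{n})$ is non-zero for every $n$. That coefficient is obtained from \eqref{Jx^n} by setting $\tau=n$ in the inner sum $\sum_{\nu=0}^{\tau}\binom{n}{n-\nu}a_{\tau-\nu}^{[n-\nu]}$; after the substitution $\mu=n-\nu$ it becomes $\sum_{\mu=0}^{n}\binom{n}{\mu}a_{\mu}^{[\mu]}$, which is precisely the quantity in \eqref{lambdan}, and the proof concludes.

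The argument is largely bookkeeping, so I expect the only delicate points to be: checking carefully that the formal series \eqref{operatorJ} really acts as a bona fide linear endomorphism of $\mathcal{P}$, so that ``comparing on the basis'' is legitimate; keeping the induction on $\deg a_{n}$ honest by using the hypothesis that $J$ does not raise the degree at exactly the right place; and getting the reindexing in the last step right, so that the $\tau=n$ coefficient of \eqref{Jx^n} is exactly $\sum_{\mu=0}^{n}\binom{n}{\mu}a_{\mu}^{[\mu]}$ and not an off-by-one variant.
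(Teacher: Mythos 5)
Your proof is correct: the triangular recursion $a_{n}(x)=J(\xi^{n})(x)-\sum_{\nu=0}^{n-1}\binom{n}{\nu}a_{\nu}(x)x^{n-\nu}$ derived from \eqref{Jx^n-short} gives existence, uniqueness and $\deg a_{n}\leq n$ exactly as needed, and identifying \eqref{lambdan} as the coefficient of $x^{n}$ in $J(\xi^{n})$ together with \eqref{iso-conditions} settles the isomorphism criterion. This is essentially the same argument as in the cited source \cite{PM-TAM-2019} (the present paper states the lemma without reproducing its proof), so there is nothing to add.
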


\medskip

The technique that we will implement in the next section require the knowledge about the $J$-image of the product of two polynomials $fg$. The polynomial  $J\left( fg\right)$ is then given by a Leibniz-type development \cite{PM-TAM-2019}  as mentioned in the next Lemma.

\begin{lemma}\label{lemmaJ(fg)} \cite{PM-TAM-2019} For any $f,g \in \mathcal{P}$, we have:
\begin{align}
\label{J(fg)}&J\left( f(x)g(x) \right)(x)= \sum_{n \geq 0 } J^{(n)}\left(f\right)(x)  \frac{g^{(n)}(x)}{n!} = \sum_{n \geq 0 } J^{(n)}\left(g\right)(x)  \frac{f^{(n)}(x)}{n!},
\end{align}
where the operator $J^{(m)}\, , \; m \geq 0$,  on $\mathcal{P}$ is defined by 
\begin{equation}\label{J^(m)}
J^{(m)}= \displaystyle \sum_{n \geq 0 } \frac{a_{n+m}(x)}{n!}D^{n}\,.
\end{equation}
\end{lemma}

\medskip

Let us suppose that $J$ is an operator expressed as in \eqref{operatorJ}, and acting as the derivative of order $k$, for some non-negative integer $k$, that is, it fulfils the following conditions.
\begin{align}
\label{deg-k1} & J\left( \xi^{k}\right)(x) = a_{0}^{[k]} \neq 0 \;\; \textrm{and} \; \;\deg\left(J\left( \xi^{n+k}\right)(x)  \right) = n,\quad n \geq 0;  \\
\label{deg-k2}& J\left( \xi^{i}\right)(x) =0,\quad 0 \leq i \leq k-1,\; \textrm{if} \; k\geq 1.
\end{align}

\begin{lemma}\label{J-descending}\cite{PM-TAM-2019} 
An operator $J$ fulfils \eqref{deg-k1}-\eqref{deg-k2} if and only if the next set of conditions hold.
\begin{description}
\item[a)] $a_{0}(x)=\cdots=a_{k-1}(x)=0$, if $k \geq 1$;
\item[b)] $\deg\left( a_{\nu}(x) \right) \leq \nu-k$, $\nu \geq k$;
\item[c)] \begin{equation}\label{neqcondition}
\lambda_{n+k}^{[k]} := \sum_{\nu = 0}^{n} \binom{n+k}{n+k-\nu} a_{n-\nu}^{[n+k-\nu]} \neq 0, \; n \geq 0.
\end{equation}
\end{description}
\end{lemma}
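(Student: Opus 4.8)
The plan is to prove Lemma \ref{J-descending} by translating the degree-and-nonvanishing conditions \eqref{deg-k1}--\eqref{deg-k2} into statements about the coefficients $a_\nu^{[i]}$ using the explicit formula \eqref{Jx^n}. First I would write down $J(\xi^{n})(x)=\sum_{\tau=0}^{n}\bigl(\sum_{\nu=0}^{\tau}\binom{n}{n-\nu}a_{\tau-\nu}^{[n-\nu]}\bigr)x^{\tau}$ and specialise it at the various relevant values of $n$. For $0\le i\le k-1$, the requirement $J(\xi^{i})(x)=0$ forces every coefficient $\sum_{\nu=0}^{\tau}\binom{i}{i-\nu}a_{\tau-\nu}^{[i-\nu]}$ to vanish for $0\le\tau\le i$; I would run an induction on $i$ (and, inside each $i$, on $\tau$) to peel these off one at a time and conclude $a_0\equiv a_1\equiv\cdots\equiv a_{k-1}\equiv 0$, which is item \textbf{a)}. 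The base case $i=0$ gives $a_0^{[0]}=0$, hence $a_0=0$ since $\deg a_0\le 0$; the inductive step uses that all lower $a_j$ already vanish so the only surviving term in the $\tau$-coefficient of $J(\xi^i)$ is $a_\tau^{[i]}$ up to a nonzero binomial factor, forcing $a_\tau^{[i]}=0$ for each $\tau\le i$, and in particular $a_i^{[i]}=0$; combined with $\deg a_i\le i$ this is not yet $a_i\equiv 0$, so the cleanest route is actually to fix $\tau$ and let $i$ range, or simply to observe that $J(\xi^{i})=0$ for all $i\le k-1$ means $J$ annihilates every polynomial of degree $\le k-1$; writing $J=\sum a_\nu D^\nu/\nu!$ and applying it to $1,x,\dots,x^{k-1}$ successively yields $a_0=0$, then $a_1=0$, etc. I would present whichever of these is shortest.

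Next, assuming \textbf{a)}, I would address \textbf{b)}. The condition $\deg\left(J(\xi^{n+k})(x)\right)\le n$ says that in \eqref{Jx^n} with $n$ replaced by $n+k$, all coefficients of $x^{\tau}$ with $\tau>n$ vanish. Because $a_0=\cdots=a_{k-1}=0$, the inner sum $\sum_{\nu=0}^{\tau}\binom{n+k}{n+k-\nu}a_{\tau-\nu}^{[n+k-\nu]}$ only involves $a_\mu$ with $\mu=\tau-\nu\ge k$; peeling off the top coefficient ($\tau=n+k$) gives $a_{n+k}^{[n+k]}$ times a binomial, then $\tau=n+k-1$ gives a combination whose new ingredient is $a_{n+k}^{[n+k-1]}$, and so on — an induction on $n$ then shows that for every $\mu\ge k$ the top coefficients $a_\mu^{[\mu]},a_\mu^{[\mu-1]},\dots,a_\mu^{[\mu-k+1]}$ all vanish, i.e. $\deg a_\mu\le\mu-k$. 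Conversely, \textbf{a)} and \textbf{b)} together immediately give $\deg\left(J(\xi^{n+k})\right)\le n$ and $J(\xi^i)=0$ for $i\le k-1$ by the same formula, so this part of the equivalence is a clean two-way reading of \eqref{Jx^n}.

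Finally, for \textbf{c)}: under \textbf{a)}--\textbf{b)} the coefficient of $x^{n}$ in $J(\xi^{n+k})(x)$ is exactly $\lambda_{n+k}^{[k]}=\sum_{\nu=0}^{n}\binom{n+k}{n+k-\nu}a_{n-\nu}^{[n+k-\nu]}$ (here I should double-check the index bookkeeping: the coefficient of $x^{\tau}$ in $J(\xi^{n+k})$ is $\sum_{\nu=0}^{\tau}\binom{n+k}{n+k-\nu}a_{\tau-\nu}^{[n+k-\nu]}$, and setting $\tau=n$ reproduces \eqref{neqcondition}; the terms with $\tau-\nu<k$ drop out by \textbf{a)}, consistent with the stated range). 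Then $\deg\left(J(\xi^{n+k})\right)=n$ — rather than strictly less — holds if and only if this leading coefficient is nonzero, which is precisely \eqref{neqcondition}; and for $n=0$ this reads $\lambda_{k}^{[k]}=a_0^{[k]}\ne 0$, matching the first half of \eqref{deg-k1}. So \textbf{c)} is equivalent to the sharpness of the degree bounds already encoded by \textbf{a)}--\textbf{b)}. Assembling the three equivalences proves the Lemma. The main obstacle I anticipate is purely bookkeeping: keeping the double indices $a_\tau^{[i]}$, the binomial coefficients, and the shifted summation ranges straight through the nested inductions in \textbf{b)} — the mathematics is elementary linear algebra on the infinite triangular matrix of $J$, but it is easy to commit an off-by-one error, so I would state one general coefficient-extraction identity from \eqref{Jx^n} up front and invoke it uniformly rather than re-deriving it in each case.
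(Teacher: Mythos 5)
Your strategy is sound, and it is worth noting that the paper itself offers nothing to compare against: Lemma \ref{J-descending} is quoted from \cite{PM-TAM-2019} without proof, so your coefficient-extraction argument based on \eqref{Jx^n} is exactly the kind of self-contained verification one would expect, and in outline it is correct. Part \textbf{a)} via the observation that $J(1)=a_{0}(x)$, then $J(\xi)=a_{1}(x)$ once $a_{0}=0$, and so on up to $i=k-1$, is clean; the converse direction and the identification of $\lambda_{n+k}^{[k]}$ in \eqref{neqcondition} as the coefficient of $x^{n}$ in $J\left(\xi^{n+k}\right)$, so that condition \textbf{c)} is precisely the sharpness of the degree bound and $\lambda_{k}^{[k]}=a_{0}^{[k]}$ recovers \eqref{deg-k1} at $n=0$, are also right.

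Two bookkeeping points in part \textbf{b)} deserve a correction before this could stand as a written proof. First, in the inner sum of \eqref{Jx^n} the index of the polynomial $a_{\mu}$ is the \emph{superscript} $n+k-\nu$, not the subscript $\tau-\nu$ (the subscript is the power of $x$); so the terms killed by \textbf{a)} are those with $\nu>n$, not those with $\tau-\nu<k$ as you wrote. Second, for fixed $n$ the coefficient of $x^{n+k}$ in $J\left(\xi^{n+k}\right)$ is $\sum_{\nu}\binom{n+k}{n+k-\nu}a_{n+k-\nu}^{[n+k-\nu]}$, a combination of the leading coefficients of \emph{all} $a_{\mu}$ with $\mu\le n+k$, so it does not by itself reduce to ``$a_{n+k}^{[n+k]}$ times a binomial''; that reduction only holds under the induction hypothesis that $\deg a_{\mu}\le\mu-k$ for $k\le\mu\le n+k-1$ (together with \textbf{a)}), which makes every term with $\nu\ge 1$ vanish whenever $\tau>n$ and leaves exactly $a_{\tau}^{[n+k]}$. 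Your proposed induction on $n$ does supply this hypothesis, so the argument goes through once the indices are straightened out — precisely the off-by-one hazard you anticipated, best handled, as you suggest, by stating the coefficient-extraction identity once and invoking it uniformly.
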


\begin{remark}
Note that in \eqref{neqcondition} we find $\lambda_{k}^{[k]}=a_{0}^{[k]} $.
\newline If $k=0$, then it is assumed that $\lambda_{n}^{[0]} \neq 0,\; n \geq 0$, matching \eqref{lambdan}, so that $J$ is an isomorphism. 
\newline If $k=1$, then $J$ imitates the usual derivative and is commonly called a lowering operator (e.g. \cite{Srivastava-Ben Cheikh,PM-TAM-2016}).
\end{remark}

Applying Lemma \ref{lemmaJ(fg)} to different pairs of polynomials, we obtain immediately the next identities.
\begin{align}
\label{Ezero}& J\left(xp(x)\right)= xJ\left(p(x)\right)+J^{(1)}\left( p(x) \right)\\
\label{Eone} &J\left(x^2p(x)\right)= x^2J\left(p(x)\right)+2xJ^{(1)}\left( p(x) \right)+J^{(2)}\left( p(x) \right) \\
\label{E3} &J\left(x^3p(x)\right)=x^3J\left(p(x)\right)+3x^2J^{(1)}\left( p(x) \right)+3x J^{(2)}\left( p(x) \right) + J^{(3)}\left( p(x) \right)
\end{align}

\begin{proposition}\label{AscOTxp}
Given an operator $J$ defined by  \eqref{operatorJ}, and taking into account the definition of the operator $J^{(m)}$, $ m \geq 0$:
 \begin{equation*}
J^{(m)}= \displaystyle \sum_{n \geq 0 } \frac{a_{n+m}(x)}{n!}D^{n}\,,
\end{equation*}
the following identities hold.
\begin{align}\label{J^{i}xp}
& J^{(i)}\left( x p(x) \right) = J^{(i+1)}\left( p(x) \right)+x J^{(i)}\left( p(x) \right)\, , \, \; i=0,1,2,\ldots.
\end{align}
\end{proposition}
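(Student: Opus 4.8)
The plan is to prove the identity \eqref{J^{i}xp} directly from the definition of $J^{(i)}$ in \eqref{J^(m)} together with the Leibniz rule for ordinary derivatives, mirroring the way \eqref{Ezero} was obtained from Lemma~\ref{lemmaJ(fg)}. Recall that $J^{(i)}=\sum_{n\geq 0}\frac{a_{n+i}(x)}{n!}D^{n}$, so $J^{(i)}\bigl(xp(x)\bigr)=\sum_{n\geq 0}\frac{a_{n+i}(x)}{n!}D^{n}\bigl(xp(x)\bigr)$. The only nontrivial ingredient is the elementary fact that $D^{n}(xp(x))=xD^{n}(p(x))+nD^{n-1}(p(x))$ for all $n\geq 1$ (and $D^{0}(xp)=xp$ for $n=0$), which follows from the Leibniz formula applied to the product of $x$ and $p(x)$, since all derivatives of $x$ of order $\geq 2$ vanish.

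First I would substitute this expression for $D^{n}(xp)$ into the series for $J^{(i)}(xp)$, splitting the sum into two pieces:
\begin{align*}
J^{(i)}\bigl(xp(x)\bigr) &= \sum_{n\geq 0}\frac{a_{n+i}(x)}{n!}\,x\,D^{n}\bigl(p(x)\bigr) + \sum_{n\geq 1}\frac{a_{n+i}(x)}{n!}\,n\,D^{n-1}\bigl(p(x)\bigr).
\end{align*}
The first sum is visibly $x\,J^{(i)}(p(x))$. For the second, cancel the factor $n$ against $n!$ to get $\sum_{n\geq 1}\frac{a_{n+i}(x)}{(n-1)!}D^{n-1}(p(x))$, then reindex with $m=n-1$ to obtain $\sum_{m\geq 0}\frac{a_{m+i+1}(x)}{m!}D^{m}(p(x))$, which is exactly $J^{(i+1)}(p(x))$ by \eqref{J^(m)}. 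Adding the two pieces yields the claimed identity.

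An equally short alternative is to invoke Lemma~\ref{lemmaJ(fg)} directly: apply the Leibniz-type formula \eqref{J(fg)} with the operator $J^{(i)}$ in the role of $J$ (which is legitimate because $J^{(i)}$ has the same structural form \eqref{operatorJ}, its coefficient sequence being the shifted sequence $\{a_{n+i}\}_{n\geq 0}$, and its associated $m$-th auxiliary operator is precisely $J^{(i+m)}$), taking $f(x)=x$ and $g(x)=p(x)$; since $f=x$ has $f'(x)=1$ and $f^{(n)}=0$ for $n\geq 2$, the expansion $J^{(i)}(xp)=\sum_{n\geq 0}(J^{(i)})^{(n)}(x)\,\frac{p^{(n)}(x)}{n!}$ collapses — wait, one rather uses the other form $\sum_{n\geq 0}J^{(i)}{}^{(n)}(p)\,\frac{x^{(n)}}{n!}=J^{(i)}(p)\cdot x + (J^{(i)})^{(1)}(p)\cdot 1 = xJ^{(i)}(p)+J^{(i+1)}(p)$. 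I expect the only point requiring a word of care is this identification of the auxiliary operators of $J^{(i)}$ with the operators $J^{(i+m)}$; once that bookkeeping is stated, the result is immediate and there is no genuine obstacle.
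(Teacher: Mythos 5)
Your argument is correct, but it follows a genuinely different route from the paper. The paper proves \eqref{J^{i}xp} by induction on $i$: the cases $i=0,1$ are read off from \eqref{Ezero}--\eqref{Eone}, and the induction step expands $J\left(x^{k+1}p\right)$ in two ways via the Leibniz-type formula of Lemma \ref{lemmaJ(fg)} applied to the monomials $x^{k}$ and $x^{k+1}$, using Pascal's rule on the binomial coefficients and then comparing the two expansions to isolate $J^{(k)}(xp)$. Your main argument instead goes straight to the definition \eqref{J^(m)}: writing $J^{(i)}(xp)=\sum_{n\geq 0}\frac{a_{n+i}(x)}{n!}D^{n}(xp)$, using the elementary identity $D^{n}(xp)=xD^{n}(p)+nD^{n-1}(p)$, cancelling $n$ against $n!$ and reindexing, which yields $xJ^{(i)}(p)+J^{(i+1)}(p)$ in a few lines, uniformly in $i$ and with no induction. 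That computation is complete and rigorous (all sums are finite on polynomials), and it is arguably shorter and more transparent than the paper's inductive comparison; what the paper's route buys is that it never reopens the series definition of $J^{(m)}$, working only with the already-established Leibniz development of Lemma \ref{lemmaJ(fg)}, in keeping with the operational style of the rest of Section \ref{operator}. One small caution about your second, alternative route: applying Lemma \ref{lemmaJ(fg)} with $J^{(i)}$ in the role of $J$ is not literally covered by the statement, since the shifted coefficient sequence $\{a_{n+i}\}_{n\geq 0}$ need only satisfy $\deg a_{n+i}\leq n+i$, not the constraint $\deg a_{\nu}\leq\nu$ built into \eqref{operatorJ}; the Leibniz-type identity is purely algebraic and survives without that degree restriction, but you would need to say so (or simply rely on your first, self-contained computation, which needs no such extension).
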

\begin{proof}
Reading $i=0$ in \eqref{J^{i}xp} we find the identity stated in \eqref{Ezero}.
Let us now consider \eqref{Ezero} with $p(x)$ filled by the product $xp(x)$:
 $$J\left( x^2 p(x) \right) = J^{(1)}\left( xp(x) \right)+x J\left( xp(x) \right)\, .$$
The last term $xJ\left( xp(x) \right)$ can be rephrased taking into account  \eqref{Ezero}, yielding
\begin{align}\label{Edois}
J\left(x^2p(x)\right)= x^2J\left(p(x)\right)+xJ^{(1)}\left( p(x) \right)+J^{(1)}\left( xp(x) \right)\,.
\end{align}
Confronting \eqref{Eone} with \eqref{Edois}, we conclude  \eqref{J^{i}xp} with $i=1$:
$$ J^{(1)}\left( x p(x) \right) = J^{(2)}\left( p(x) \right)+x J^{(1)}\left( p(x) \right) . $$

Let us assume as induction hypotheses over $k \geq 2$ that
\begin{align*}
& J^{(i)}\left( x p(x) \right) = J^{(i+1)}\left( p(x) \right)+x J^{(i)}\left( p(x) \right)\, , \, \; i=0,\ldots, k-1.
\end{align*}
In view of Lemma \ref{lemmaJ(fg)}, we learn that for any polynomial $p=p(x)$
\begin{align*}
&J\left( x^{k+1} p\right) = \sum_{n \geq 0} J^{(n)}(p) \dfrac{ \left( x^{k+1} \right)^{(n)}}{n !} \; ;
\end{align*}
and thus we may write:
\begin{align}
\label{S1}&J\left( x^{k+1} p\right) = \sum_{\mu=0}^{k+1} J^{(\mu)}(p) \binom{k+1}{\mu} x^{k+1-\mu} \; ; \\
\label{S2} & J\left( x^{k} p\right) = \sum_{\nu=0}^{k}  J^{(\nu)}(p) \binom{k}{\nu} x^{k-\nu} \, .
\end{align}
Let us now consider \eqref{S2} with $p$ filled by the product $xp$ as follows:
\begin{align}
\label{S2-tilde} & J\left( x^{k+1} p\right) = \sum_{\nu=0}^{k}  J^{(\nu)}(xp) \binom{k}{\nu} x^{k-\nu} \, .
\end{align}
By means of the induction hypotheses, identity \eqref{S2-tilde} asserts the following.
\begin{align}
\nonumber &J\left( x^{k+1} p\right) =  \sum_{\nu=0}^{k-1}  \left( J^{(\nu+1)}(p)+ xJ^{(\nu)}(p)\right) \binom{k}{\nu} x^{k-\nu} + J^{(k)}(xp)\, \\
\nonumber &= \sum_{\nu=0}^{k-1}  J^{(\nu+1)}(p)\binom{k}{\nu} x^{k-\nu}  + \sum_{\nu=1}^{k-1} J^{(\nu)}(p) \binom{k}{\nu} x^{k+1-\nu} + J^{(k)}(xp) + J(p)x^{k+1}\, \\
\nonumber &= \sum_{\nu=0}^{k-2}  J^{(\nu+1)}(p)\left\{\binom{k}{\nu}+ \binom{k}{\nu+1} \right\}x^{k-\nu}  +  J^{(k)}(p) \binom{k}{k-1} x + J^{(k)}(xp) + J(p)x^{k+1}\, \\
\nonumber &= \sum_{\nu=0}^{k-2}  J^{(\nu+1)}(p)\binom{k+1}{\nu+1} x^{k-\nu}  +  J^{(k)}(p) k x + J^{(k)}(xp) + J(p)x^{k+1}\, \\
\nonumber &= \sum_{\nu=0}^{k-1}  J^{(\nu)}(p)\binom{k+1}{\nu} x^{k+1-\nu}  +  J^{(k)}(p) k x + J^{(k)}(xp)
\end{align}
In brief
\begin{align}
 \label{JwithinHyp}  J\left( x^{k+1} p\right) = \sum_{\nu=0}^{k-1}  J^{(\nu)}(p)\binom{k+1}{\nu} x^{k+1-\nu}  +  J^{(k)}(p) k x + J^{(k)}(xp) \, . 
\end{align}
Comparing \eqref{JwithinHyp} with \eqref{S1}, we get
\begin{align*}
& J^{(k)}(p)\binom{k+1}{k} x^{k+1-k} + J^{(k+1)}(p)\binom{k+1}{k+1} = kx J^{(k)}(p) + J^{(k)}(xp)\, \\
& \textrm{hence}\; \; x J^{(k)}(p) +  J^{(k+1)}(p) = J^{(k)}(xp)\, , 
\end{align*}
which ends the proof. 
\end{proof}

\section{An isomorphism applied to a $2$-orthogonal sequence }\label{sec:k=0 and J third order}

In the sequel, we consider that $J$ is an isomorphism and $a_{\nu}(x)=0\,,\; \nu \geq 4$, thus
\begin{align}
\label{third-order-J}& J = a_{0}(x)I + a_{1}(x)D+\frac{a_{2}(x)}{2}D^{2}+\frac{a_{3}(x)}{3!}D^{3}\, ,\, \textrm{where}\\
\nonumber & a_{0}(x)=a_{0}^{[0]}\; , \; a_{1}(x)=a_{0}^{[1]}+a_{1}^{[1]}x \; , \; a_{2}(x)=a_{0}^{[2]}+a_{1}^{[2]}x +a_{2}^{[2]}x^2 \; , \\
\nonumber & a_{3}(x)=a_{0}^{[3]}+a_{1}^{[3]}x +a_{2}^{[3]}x^2   + a_{3}^{[3]}x^3   \;, \; 
\end{align}
and we suppose that the MPS $\{P_{n} \}_{n \geq 0}$ is $2$-orthogonal and fulfils 
\begin{align}\label{J-image}
J\left( P_{n}(x) \right)= \lambda^{[0]}_{n}P_{n}(x), \; \textrm{with} \; \lambda^{[0]}_{n} \neq 0\, ,n \geq 0\, ,\end{align}
where
$$\lambda^{[0]}_{n} =a_{0}^{[0]}+\binom{n}{1}a_{1}^{[1]}+\binom{n}{2}a_{2}^{[2]}+ \binom{n}{3}a_{3}^{[3]}\, , \, n \geq 0 \,.$$
In view of $a_{\nu}(x)=0\,,\; \nu \geq 4\, ,$
 the operators
$J^{(1)}$, $J^{(2)}$ and $J^{(3)}$ have the following definitions as indicated in  \eqref{J^(m)}.
\begin{align}
\label{J^{(1)}} & J^{(1)}(p) = \left( a_{1}(x)I+a_{2}(x) D +\frac{1}{2!}a_{3}(x)  D^{2} \right) (p)  \\
\label{J^{(2)}}&  J^{(2)}(p)=\left( a_{2}(x)I+ a_{3}(x)D \right) (p)\\
\label{J^{(3)}} & J^{(3)}(p)=a_{3}(x)p\\
\nonumber & J^{(m)}(p) = 0\, , \; m \geq 4.
\end{align}

Broadly speaking, in this section we will intertwine the action of operators $J^{(k)}$, for initial values of $k$, with the simple multiplication by the monomial $x$, herein called $T_{x}$:
$$T_{x}: \, p \mapsto xp \; ,$$
in order to obtain the expansions of polynomials $J^{(1)}\left( P_{n}(x)\right)$, $J^{(2)}\left( P_{n}(x)\right)$ and $J^{(3)}\left( P_{n}(x)\right)$ in the basis formed by the $2$-orthogonal MPS $\{P_{n}(x)\}_{n \geq 0}$.

Most importantly, we review \eqref{RR2ortho}-\eqref{RR2ortho-ci} by establishing the following definition, considering henceforth $P_{-i}(x)=0$, $i=1,2,\ldots$.
\begin{align}\label{operatorTx}
T_{x}\left(P_{n}(x) \right) = P_{n+1}(x)+\beta_{n}P_{n}(x)+\alpha_{n}P_{n-1}(x)+\gamma_{n-1}P_{n-2}(x),\, n\geq 0\, .
\end{align}
Additionally, we can use the knowledge provided by Proposition \ref{AscOTxp}, valid for all operators not decreasing the degree \eqref{operatorJ}, that asserts
\begin{align}\label{J^{i}xp_Tx}
& J^{(i)}\left( T_{x}\left( p \right) \right) = J^{(i+1)}\left( p \right)+T_{x}\left( J^{(i)}\left( p \right)\right) \, , \, \; i=0,1,2,\ldots.
\end{align}
\medskip

\pagebreak
\noindent \texttt{\underline{First step}: applying $J$ to the four-term recurrence}
\medskip

Let us apply the operator $J$ to the recurrence relation \eqref{RR2ortho}, using both \eqref{J^{i}xp_Tx}, with $i=0$, and \eqref{J-image}:
\begin{align}
&\lambda^{[0]}_{n+2} T_{x}\left(P_{n+2}(x)\right)+ J^{(1)}\left( P_{n+2}(x)  \right) = \lambda^{[0]}_{n+3} P_{n+3}(x)\\
\nonumber &+\beta_{n+2}\lambda^{[0]}_{n+2} P_{n+2}(x)+\alpha_{n+2}\lambda^{[0]}_{n+1} P_{n+1}(x)+\gamma_{n+1}\lambda^{[0]}_{n}  P_{n}(x)\, .
\end{align}
Next, by \eqref{operatorTx}  we get $J^{(1)}\left( P_{n+2}(x) \right) $ in the basis $\{ P_{n}(x) \}_{ n \geq 0 }$:
\begin{align}\label{J1}
J^{(1)}\left( P_{n+2}(x)\right) &= \left(  \lambda^{[0]}_{n+3} - \lambda^{[0]}_{n+2}\right)   P_{n+3}(x) \\
\nonumber  + \alpha_{n+2}&\left(  \lambda^{[0]}_{n+1} - \lambda^{[0]}_{n+2} \right) P_{n+1}(x)+ \gamma_{n+1} \left( \lambda^{[0]}_{n}-\lambda^{[0]}_{n+2} \right) P_{n}(x)\, , n \geq 0.
\end{align}
Taking into account  the information retained in identities $J\left( P_{0}(x) \right)= \lambda^{[0]}_{0}P_{0}(x),$ 
$J\left( P_{1}(x) \right)= \lambda^{[0]}_{1}P_{1}(x)$, it is easy to verify that
\begin{align*}
&a_{1}(x)=J^{(1)}\left( P_{0}(x)\right) = \left(  \lambda^{[0]}_{1} - \lambda^{[0]}_{0}\right)   P_{1}(x) \, ,\\
&a_{1}(x)P_{1}(x)+a_{2}(x) = J^{(1)}\left( P_{1}(x)\right) = \left(  \lambda^{[0]}_{2} - \lambda^{[0]}_{1}\right)   P_{2}(x) +\alpha_{1}  \left( \lambda^{[0]}_{0}-\lambda^{[0]}_{1} \right) P_{0}(x)\, ,
\end{align*}
and thus we may define the image of every $P_{n}(x)$ through the operator $J^{(1)}$ as follows:
\begin{align}\label{J^(1)Operator}
J^{(1)}\left( P_{n}(x)\right) &= \left(  \lambda^{[0]}_{n+1} - \lambda^{[0]}_{n}\right)   P_{n+1}(x) \\
\nonumber + &\alpha_{n}\left(  \lambda^{[0]}_{n-1} - \lambda^{[0]}_{n} \right) P_{n-1}(x)+ \gamma_{n-1} \left( \lambda^{[0]}_{n-2}-\lambda^{[0]}_{n} \right) P_{n-2}(x)\; ,  n \geq 0.
\end{align}

\medskip
\noindent \texttt{\underline{Second step}: applying $J^{(1)}$ to the four-term recurrence}
\medskip

Let us now apply operator $J^{(1)}$ to the recurrence relation \eqref{RR2ortho} fulfilled by $\{P_{n}(x)\}_{n \geq 0}$:
\begin{align}\label{applyingJ(1)}
&J^{(1)} \left( T_{x}\left(P_{n+2}(x) \right) \right)= J^{(1)} \left(P_{n+3}(x)\right) \\
\nonumber &+\beta_{n+2}J^{(1)} \left(P_{n+2}(x)\right)+\alpha_{n+2}J^{(1)} \left(P_{n+1}(x)\right)+\gamma_{n+1}J^{(1)} \left(P_{n}(x)\right)\, .
\end{align}
We may then perform the following transformations:
\begin{align*}
G_{1}(n):\;\; & J^{(1)}\left( T_{x}\left( P_{n+2}(x) \right) \right) \rightarrow J^{(2)}\left( P_{n+2}(x) \right)+T_{x}\left( J^{(1)}\left( P_{n+2}(x) \right)\right) \, , \\
I_{1}(n): \;\;& J^{(1)}\left( P_{n}(x)\right)  \rightarrow \left(  \lambda^{[0]}_{n+1} - \lambda^{[0]}_{n}\right)   P_{n+1}(x) \\
\nonumber & \; + \alpha_{n}\left(  \lambda^{[0]}_{n-1} - \lambda^{[0]}_{n} \right) P_{n-1}(x)+ \gamma_{n-1} \left( \lambda^{[0]}_{n-2}-\lambda^{[0]}_{n} \right) P_{n-2}(x)\; , \\
M(n):\;\; & T_{x}\left(P_{n}(x) \right) \rightarrow P_{n+1}(x)+\beta_{n}P_{n}(x)+\alpha_{n}P_{n-1}(x)+\gamma_{n-1}P_{n-2}(x)\, .\,
\end{align*}
These transformations are defined in a suitable computer software, allowing a symbolic implementation that executes the adequate positive increments on the variable $n$. In this manner, it is possible for us to obtain the expansion of the image of $P_{n+2}(x)$ by operator $J^{(2)}$, in the basis $\{P_{n}(x)\}_{n \geq 0}$.

As a result of these computations,  \eqref{applyingJ(1)} corresponds to the next identity.
\begin{align}\label{J2}
&J^{(2)} \left(P_{n+2}(x) \right) = A_{n+4} P_{n+4}(x) +  B_{n+3} P_{n+3}(x) + C_{n+2} P_{n+2}(x)  \\
\nonumber &+D_{n+1} P_{n+1}(x) + F_{n} P_{n}(x)+ G_{n-1} P_{n-1}(x) + H_{n-2}P_{n-2}(x) \,  ,
\end{align}
where
\begin{align}
\label{second-step-coeffs}& A_{n} = \lambda^{[0]}_{n} -2 \lambda^{[0]}_{n-1}+ \lambda^{[0]}_{n-2}\, ;\\
\nonumber & B_{n} = \left(\beta_{n-1}-\beta_{n} \right)\left( \lambda^{[0]}_{n}-\lambda^{[0]}_{n-1}\right) \, ;\\
\nonumber & C_{n} = 2 \alpha_{n+1} \left( \lambda^{[0]}_{n}- \lambda^{[0]}_{n+1} \right)+  2 \alpha_{n} \left( \lambda^{[0]}_{n}- \lambda^{[0]}_{n-1} \right) \, ; \\
\nonumber & D_{n}=  \alpha_{n+1} \left(\beta_{n+1}-\beta_{n} \right) \left( \lambda^{[0]}_{n}- \lambda^{[0]}_{n+1} \right)\\
 \nonumber & \quad + \gamma_{n+1}\left(  \lambda^{[0]}_{n}-2 \lambda^{[0]}_{n+2}+ \lambda^{[0]}_{n+1} \right)+  \gamma_{n}\left(  \lambda^{[0]}_{n}-2 \lambda^{[0]}_{n-1}+ \lambda^{[0]}_{n+1} \right) \, ; \\
\nonumber  & F_{n} = \alpha_{n+2}\alpha_{n+1}\left( \lambda^{[0]}_{n} -2 \lambda^{[0]}_{n+1} + \lambda^{[0]}_{n+2} \right) + \gamma_{n+1} \left(\beta_{n+2}-\beta_{n} \right)\left( \lambda^{[0]}_{n} - \lambda^{[0]}_{n+2} \right) \, ; \\
 \nonumber & G_{n} = \alpha_{n+3}\gamma_{n+1} \left( \lambda^{[0]}_{n} -2 \lambda^{[0]}_{n+2} + \lambda^{[0]}_{n+3} \right) + \alpha_{n+1}\gamma_{n+2} \left( \lambda^{[0]}_{n} -2 \lambda^{[0]}_{n+1} + \lambda^{[0]}_{n+3} \right) \, ; \\
 \nonumber & H_{n} = \gamma_{n+3}\gamma_{n+1} \left( \lambda^{[0]}_{n} -2 \lambda^{[0]}_{n+2} + \lambda^{[0]}_{n+4} \right)\, . 
\end{align}
Once more, taking into account that $J\left( P_{i}(x) \right)= \lambda^{[0]}_{i}P_{i}(x),$ $i=0,1,$ and also  \eqref{J^(1)Operator} for $n=0,1,2$, we are able to confirm that the following initial identities hold: 
\begin{align*}
&J^{(2)} \left(P_{0}(x) \right) = A_{2} P_{2}(x) +  B_{1} P_{1}(x) + C_{0} P_{0}(x) \, , \\
&J^{(2)} \left(P_{1}(x) \right) = A_{3} P_{3}(x) +  B_{2} P_{2}(x) + C_{1} P_{1}(x) + D_{0} P_{0}(x)\,, 
\end{align*}
and, hence :
\begin{align}\label{J^(2)Operator}
&J^{(2)} \left(P_{n}(x) \right) = A_{n+2} P_{n+2}(x) +  B_{n+1} P_{n+1}(x) + C_{n} P_{n}(x)  \\
\nonumber &+D_{n-1} P_{n-1}(x) + F_{n-2} P_{n-2}(x)+ G_{n-3} P_{n-3}(x) + H_{n-4}P_{n-4}(x) \,  , \, n \geq 0\,. 
\end{align}

\medskip
\noindent \texttt{\underline{Third step}: applying $J^{(2)}$ to the four-term recurrence}
\medskip

Let us now apply operator $J^{(2)}$ to the recurrence relation \eqref{RR2ortho} fulfilled by $\{P_{n}(x)\}_{n \geq 0}$:
\begin{align*}
&J^{(2)} \left( T_{x}\left(P_{n+2}(x) \right) \right)= J^{(2)} \left(P_{n+3}(x)\right) \\
&+\beta_{n+2}J^{(2)} \left(P_{n+2}(x)\right)+\alpha_{n+2}J^{(2)} \left(P_{n+1}(x)\right)+\gamma_{n+1}J^{(2)} \left(P_{n}(x)\right)\, .
\end{align*}
We may perform the following transformations:
\begin{align*}
G_{2}(n):\;\; & J^{(2)}\left( T_{x}\left( P_{n+2}(x) \right) \right) \rightarrow J^{(3)}\left( P_{n+2}(x) \right)+T_{x}\left( J^{(2)}\left( P_{n+2}(x) \right)\right) \, , \\
I_{2}(n): \;\;& J^{(2)}\left( P_{n}(x)\right)  \rightarrow A_{n+2} P_{n+2}(x) +  B_{n+1} P_{n+1}(x) + C_{n} P_{n}(x)  \\
\nonumber &+D_{n-1} P_{n-1}(x) + F_{n-2} P_{n-2}(x)+ G_{n-3} P_{n-3}(x) + H_{n-4}P_{n-4}(x) \, , \\
M(n):\;\; & T_{x}\left(P_{n}(x) \right) \rightarrow P_{n+1}(x)+\beta_{n}P_{n}(x)+\alpha_{n}P_{n-1}(x)+\gamma_{n-1}P_{n-2}(x)\, .\,
\end{align*}
As before, these transformations and consequent simplifications, permit to express $J^{(3)}\left(P_{n+2}(x)\right)$ as follows.

\begin{align}\label{J(3)}
&J^{(3)} \left(P_{n+2}(x) \right) =  a_{3}^{[3]}P_{n+5}(x)\\
\nonumber &+\left( A_{n+4} \beta _{n+2}-A_{n+4} \beta _{n+4}-B_{n+3}+B_{n+4} \right)P_{n+4}(x) \\
\nonumber &+\left( A_{n+3} \alpha _{n+2}-A_{n+4} \alpha _{n+4}+B_{n+3} \beta _{n+2}-B_{n+3} \beta _{n+3}-C_{n+2}+C_{n+3} \right) P_{n+3}(x)\\
\nonumber &+ \left( A_{n+2} \gamma _{n+1}-A_{n+4} \gamma _{n+3}+B_{n+2} \alpha _{n+2}-B_{n+3} \alpha _{n+3}-D_{n+1}+D_{n+2}  \right)P_{n+2}(x)\\
\nonumber &+ \left( B_{n+1} \gamma _{n+1}-B_{n+3} \gamma _{n+2}+C_{n+1} \alpha _{n+2}-C_{n+2} \alpha _{n+2} \right.\\
\nonumber & \left. \hspace{2.5cm} -D_{n+1} \beta _{n+1}+D_{n+1} \beta _{n+2}-F_n+F_{n+1}  \right)P_{n+1}(x)\\
\nonumber &+\left(   C_n \gamma _{n+1}-C_{n+2} \gamma _{n+1}-D_{n+1} \alpha _{n+1}+D_n \alpha _{n+2} \right. \\
\nonumber &\left. \hspace{2.5cm} -F_n \beta _n+F_n \beta _{n+2}-G_{n-1}+G_n \right) P_{n}(x) \\
\nonumber &+\left( -D_{n+1} \gamma _n+D_{n-1} \gamma _{n+1}-F_n \alpha _n+F_{n-1} \alpha _{n+2} \right. \\
\nonumber &\left. \hspace{2.5cm} -G_{n-1} \beta _{n-1}+G_{n-1} \beta _{n+2}-H_{n-2}+H_{n-1}  \right)P_{n-1}(x) \\
\nonumber &+\left( -F_n \gamma _{n-1}+F_{n-2} \gamma _{n+1} \right. \\
\nonumber & \left. \hspace{2.5cm} -G_{n-1} \alpha _{n-1}+G_{n-2} \alpha _{n+2}-H_{n-2} \beta _{n-2}+H_{n-2} \beta _{n+2} \right)P_{n-2}(x)\\
\nonumber &+\left( -G_{n-1} \gamma _{n-2}+G_{n-3} \gamma _{n+1}-H_{n-2} \alpha_{n-2}+H_{n-3} \alpha _{n+2} \right) P_{n-3}(x)\\
\nonumber &+ \left( H_{n-4} \gamma _{n+1}-H_{n-2} \gamma _{n-3}\right) P_{n-4}(x)\;  , \; n \geq 0\, , 
\end{align}
with initial conditions:
\begin{footnotesize}
\begin{align*}
&J^{(3)} \left(P_{0}(x) \right) = a_{3}^{[3]}P_{3}(x)+\left( \left(\beta _0+\beta _1+\beta _2\right) a_{3}^{[3]}+a_{2}^{[3]}  \right)P_{2}(x)\\
&+\left(a_{3}^{[3]} \left(\alpha _1+\alpha _2+\beta _0^2+\beta _1 \beta _0+\beta _1^2\right)+\left(\beta _0+\beta _1\right) a_{2}^{[3]}+a_{1}^{[3]} \right)P_{1}(x)\\
&+\left(a^{[3]}_{3} \left(\alpha _1 \left(2 \beta _0+\beta _1\right)+\beta _0^3+\gamma _1\right)+\alpha _1 a^{[3]}_{2}+\beta _0 \left(\beta _0 a^{[3]}_{2}+a^{[3]}_{1}\right)+a^{[3]}_{0} \right)\, ;
\end{align*}
\begin{align*}
&J^{(3)} \left(P_{1}(x) \right) = a_{3}^{[3]}P_{4}(x)+\left( \left(\beta _1+\beta _2+\beta _3\right) a_{3}^{[3]}+a_{2}^{[3]}  \right)P_{3}(x)\\
&+\left( a^{[3]}_{3} \left(\alpha _1+\alpha _2+\alpha _3+\beta _1^2+\beta _2 \beta _1+\beta _2^2\right)+\left(\beta _1+\beta _2\right) a^{[3]}_{2}+a^{[3]}_{1}\right)P_{2}(x)\\
&+ \left( a^{[3]}_{3} \left(2 \left(\alpha _1+\alpha _2\right) \beta _1+\alpha _2 \beta _2+\beta _1^3+\gamma _1+\gamma _2\right)+\alpha _1 \beta _0 a^{[3]}_{3}+\left(\alpha _1+\alpha _2\right) a^{[3]}_{2} \right. \\
\nonumber &\hspace{2cm} \left. +\beta _1 \left(\beta _1 a^{[3]}_{2}+a^{[3]}_{1}\right)+a^{[3]}_{0}  \right)P_{1}(x)\\
\nonumber &+\left(\alpha _1 \left(a^{[3]}_{3} \left(\alpha _2+\beta _0^2+\beta _1 \beta _0+\beta _1^2\right)+\left(\beta _0+\beta _1\right) a^{[3]}_{2}+a^{[3]}_{1}\right)+\alpha _1^2 a^{[3]}_{3} \right.\\
\nonumber &\hspace{2cm} \left.  +\gamma _1 \left(\left(\beta _0+\beta _1+\beta _2\right) a^{[3]}_{3}+a^{[3]}_{2}\right) \right)\, .
\end{align*}
\end{footnotesize}

Recalling that $J^{(3)}\left(p\right)=a_{3}(x)p= \left( a_{3}^{[3]}x^3+a_{2}^{[3]}x^2 +a_{1}^{[3]}x+a_{0}^{[3]} \right) p$, identity \eqref{J(3)} enables the computation of the recurrence coefficients $\left(\beta_{n} \right)_{n \geq 0}$, $\left(\alpha_{n} \right)_{n \geq 1}$ and $\left(\gamma_{n} \right)_{n \geq 1} $ of a $2$-orthogonal $\{P_{n}\}_{n \geq 0}$ that is the solution of $J\left( P_{n} \right)= \lambda_{n}^{[0]}P_{n}(x) \, ,$ $ \, n \geq 0\, ,$ for a third-order $J$. We will pursuit with such computations in the next section for particular cases.

\section{Finding the 2-orthogonal solution of some third-order differential equations} \label{solutions}
Let us now assume that the $2$-orthogonal MPS $\{P_{n}\}_{ n \geq 0}$ fulfils $J\left( P_{n} \right)= \lambda_{n}^{[0]}P_{n}(x) \, ,$ $ \, n \geq 0$, where $J$ is defined by \eqref{operatorJ} with $a_{\nu}(x)=0\, ,\, \nu \geq 4$.
\medskip

\noindent Initially, we consider that $\deg\left(a_{3}(x) \right)=0$, though $a_{3}(x) \neq 0$, $\deg\left(a_{2}(x) \right) \leq 1$ and $\deg\left(a_{1}(x) \right)$=1. In other words:
\begin{align}
\label{third-order-J:a3=1}& \left( a_{0}(x)I + a_{1}(x)D+\frac{a_{2}(x)}{2}D^{2}+\frac{a^{[3]}_{0}}{3!}D^{3}\right) \left( P_{n}(x)\right) = \lambda^{[0]}_{n}P_{n}(x) \, ,\, \\
\nonumber & \textrm{with}\;\;   a_{0}(x)=a_{0}^{[0]}\; , \; a_{1}(x)=a_{0}^{[1]}+a_{1}^{[1]}x \; , \,a_{1}^{[1]} \neq 0, \; \\
\nonumber &a_{2}(x)=a_{0}^{[2]}+a_{1}^{[2]}x \; , \\
\nonumber & a_{3}(x)=a_{0}^{[3]} \neq 0  \;. \; 
\end{align}
Consequently, $\lambda_{n}^{[0]}= n a_{1}^{[1]} + a_{0}^{[0]}$, which we are assuming as nonzero for all non-negative integer $n$. Taking into account this set of hypotheses, identity \eqref{J(3)} provides several difference equations due to the linear independence of $\{P_{n}\}_{n \geq 0}.$
In particular, the coefficient of $P_{n+4}(x)$ on the right hand of \eqref{J(3)} is expressed by
$$-a^{[1]}_{1} \left(\beta _{n+2}-2 \beta _{n+3}+\beta _{n+4}\right)$$ and thus we get the equation
\begin{align}\label{Eq.1}
\beta _{n+4}-2\beta _{n+3}+\beta _{n+2}=0\,, \, n \geq 0\,. 
\end{align}
Also, the coefficients of $P_{n+3}(x)$ and $P_{n+2}(x)$ on  the right hand of \eqref{J(3)} provide the following two identities
\begin{align}
\label{Eq.2} a^{[1]}_{1}  \left(-2 \alpha _{n+2}+4 \alpha _{n+3}-2 \alpha _{n+4}+\left(\beta _{n+2}-\beta _{n+3}\right){}^2\right) &=0\, , \\
\label{Eq.3} -3 a^{[1]}_{1} \left(\gamma _{n+1}-2 \gamma _{n+2}+\gamma _{n+3}\right) &= a^{[3]}_{0}\,.
\end{align}

\medskip

\noindent Taking into account the results presented in the previous sections  \ref{operator} and \ref{sec:k=0 and J third order}, we have proved the following Proposition.

\medskip

\begin{proposition}\label{finalProp-case1}
Let us consider a $2$-orthogonal polynomial sequence $\{P_{n}\}_{n \geq 0}$ fulfilling
\begin{align*}
&J\left(P_{n}(x) \right) =  \lambda_{n}^{[0]}P_{n}(x) 
\end{align*}
where $J$ is defined by \eqref{operatorJ} with $a_{\nu}(x)=0\, ,\, \nu \geq 4,$
and such that  $a_{0}(x)=a_{0}^{[0]}\, , \, a_{1}(x)=a_{0}^{[1]}+a_{1}^{[1]}x \, , \,a_{1}^{[1]} \neq 0\,, \; $ $a_{2}(x)=a_{0}^{[2]}+a_{1}^{[2]}x \, , $ $\, a_{3}(x)=a_{0}^{[3]} \neq 0\, . $
\par Then the coefficient $a_{1}^{[2]}$ of polynomial $a_{2}(x)$ is zero and the recurrence coefficients of the sequence $\{P_{n}\}_{n \geq 0}$ are the following.
\begin{align}
\label{beta-case1}& \beta_{n}= -\frac{a_{0}^{[1]}}{a_{1}^{[1]}}\, , \, n \geq 0 \, ,\\
\label{alpha-case1}& \alpha_{n+1} = -\frac{a_{0}^{[2]}}{2a_{1}^{[1]}} (n+1)\, , \, n \geq 0 \, ,\\
\label{gamma-case1}& \gamma_{n+1} = -\frac{a_{0}^{[3]}}{a_{1}^{[1]}} \left(\frac{1}{3} + \frac{1}{2}n+\frac{1}{6}n^2 \right)= -\frac{a_{0}^{[3]}}{6 a_{1}^{[1]}} \left(n+1 \right)\left(n+2\right)\, , \, n \geq 0 \,.
\end{align}

Conversely, the $2$-orthogonal polynomial sequence $\{P_{n}\}_{n \geq 0}$ defined by the recurrence coefficients \eqref{beta-case1}-\eqref{gamma-case1} fulfils the third order differential equation 

\quad \quad $J\left(P_{n}(x) \right) =  \lambda_{n}^{[0]}P_{n}(x) \, ,$ $ \, n \geq 0\, ,$

\noindent where  $a_{0}(x)=a_{0}^{[0]}\, , \, a_{1}(x)=a_{0}^{[1]}+a_{1}^{[1]}x \, , \,a_{1}^{[1]} \neq 0\,, \; $ $a_{2}(x)=a_{0}^{[2]}\, , $ $\, a_{3}(x)=a_{0}^{[3]} \neq 0\, , $ and $a_{\nu}(x)=0\, ,\, \nu \geq 4$. 
\end{proposition}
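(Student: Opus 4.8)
The plan is to read off, from identity \eqref{J(3)} specialized to the hypotheses in \eqref{third-order-J:a3=1}, the full system of difference equations in $\beta_n$, $\alpha_n$, $\gamma_n$ obtained by matching the coefficient of each $P_{n+j}(x)$, $j=4,3,2,1,0,-1,-2,-3,-4$, to the corresponding coefficient of $a_3(x)\,P_{n+2}(x) = a_0^{[3]}P_{n+2}(x)$, and then to solve that system. Since $a_3(x)=a_0^{[3]}$ is constant, the only nonzero coefficients on the right-hand side of $J^{(3)}(P_{n+2})=a_0^{[3]}P_{n+2}$ are at level $P_{n+2}$, so all nine coefficient-matching equations are either homogeneous (levels $n+4,n+3,n+1,n,n-1,n-2,n-3,n-4$) or inhomogeneous with right side $a_0^{[3]}$ (level $n+2$); the first three of these are exactly \eqref{Eq.1}, \eqref{Eq.2}, \eqref{Eq.3} already extracted in the text.

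First I would dispatch $\beta_n$: equation \eqref{Eq.1} is a second-order linear recurrence forcing $\beta_{n+2}-\beta_{n+1}$ to be constant, i.e. $\beta_n$ affine in $n$; then the lower-level homogeneous equations (the coefficients of $P_{n}$, $P_{n-1}$, etc., which involve $\beta_n$ through the $B$, $F$, $G$, $H$ coefficients of \eqref{second-step-coeffs}) together with the initial identities for $J^{(3)}(P_0)$, $J^{(3)}(P_1)$ pin the slope to zero, giving the constant value $\beta_n=-a_0^{[1]}/a_1^{[1]}$ — this is consistent with $J^{(1)}(P_0)=a_1(x)=(\lambda_1^{[0]}-\lambda_0^{[0]})P_1(x)=a_1^{[1]}(x-\beta_0)$, which already yields $\beta_0=-a_0^{[1]}/a_1^{[1]}$. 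With $\beta_n$ constant, \eqref{Eq.2} collapses to $\alpha_{n+2}-2\alpha_{n+3}+\alpha_{n+4}=0$, so $\alpha_n$ is affine in $n$; the initial identity $a_1(x)P_1(x)+a_2(x)=J^{(1)}(P_1)=(\lambda_2^{[0]}-\lambda_1^{[0]})P_2(x)+\alpha_1(\lambda_0^{[0]}-\lambda_1^{[0]})P_0(x)$, evaluated with the constant value of $\beta$, fixes $\alpha_1=-a_0^{[2]}/(2a_1^{[1]})$ and simultaneously forces $a_1^{[2]}=0$ (since the left side has an $x^2$-term $\tfrac12 a_1^{[2]}$ whose counterpart on the right vanishes); the slope of $\alpha_n$ is then determined by a lower-level homogeneous equation from \eqref{J(3)}, giving \eqref{alpha-case1}. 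Finally \eqref{Eq.3}, with $a_1^{[1]}\neq0$, is an inhomogeneous second-order recurrence $\gamma_{n+1}-2\gamma_{n+2}+\gamma_{n+3}=-a_0^{[3]}/(3a_1^{[1]})$ whose general solution is a particular quadratic in $n$ plus an affine homogeneous part; the initial identities for $J^{(3)}(P_0)$ and $J^{(3)}(P_1)$ (which carry $\gamma_1$, $\gamma_2$ explicitly) determine the two free constants, producing \eqref{gamma-case1}. One must also check that every remaining coefficient equation in \eqref{J(3)} (levels $P_{n+1}$ down to $P_{n-4}$) is satisfied identically by these closed forms — a finite verification using \eqref{second-step-coeffs} and the now-explicit $\lambda_n^{[0]}=na_1^{[1]}+a_0^{[0]}$.

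For the converse, I would start from the MPS $\{P_n\}$ prescribed by the recurrence coefficients \eqref{beta-case1}–\eqref{gamma-case1} (which is a genuine $2$-orthogonal sequence provided $\gamma_{n+1}\neq0$, i.e. $a_0^{[3]}\neq0$, matching the hypothesis), set $a_2(x)=a_0^{[2]}$ (constant, consistent with the forward direction having forced $a_1^{[2]}=0$), and verify directly that $J(P_n)=\lambda_n^{[0]}P_n$. The clean way is to run the three-step cascade of Section~\ref{sec:k=0 and J third order} in reverse as a consistency check: assuming $J(P_n)=\lambda_n^{[0]}P_n$ gives \eqref{J^(1)Operator}, \eqref{J^(2)Operator}, \eqref{J(3)}; conversely, since $J^{(3)}(p)=a_3(x)p=a_0^{[3]}p$ is \emph{known a priori} to be multiplication by the constant $a_0^{[3]}$, substituting the closed-form $\beta$, $\alpha$, $\gamma$ into \eqref{J(3)} must reduce it to the identity $J^{(3)}(P_{n+2})=a_0^{[3]}P_{n+2}$ — if it does, then the two intermediate operators $J^{(2)}$ and $J^{(1)}$ defined on the basis by \eqref{J^(2)Operator}, \eqref{J^(1)Operator} are forced to be the genuine $J^{(2)}$, $J^{(1)}$ attached to $J$, and hence $J(P_n)=\lambda_n^{[0]}P_n$ holds; this is essentially a degree/leading-coefficient bookkeeping argument together with the Leibniz identity \eqref{J^{i}xp_Tx}.

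The main obstacle I anticipate is not any single step but the bookkeeping in the forward direction: extracting and correctly simplifying the lower-level coefficient equations (those multiplying $P_{n}$ through $P_{n-4}$ in \eqref{J(3)}), each of which mixes $\beta$, $\alpha$, $\gamma$ through the composite coefficients $A,B,C,D,F,G,H$ of \eqref{second-step-coeffs}, and checking that the affine/quadratic ansätze forced by \eqref{Eq.1}–\eqref{Eq.3} are actually consistent with all of them and with the two messy initial identities for $J^{(3)}(P_0)$, $J^{(3)}(P_1)$. In practice this is where a symbolic computation (as the authors describe for the three transformations $G_i(n)$, $I_i(n)$, $M(n)$) does the heavy lifting; the conceptual content is merely that constancy of $a_3(x)$ and linearity of $a_1(x)$ kill all but finitely many degrees of freedom. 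The $2$-orthogonality regularity condition $\gamma_{n+1}\neq0$ needs a one-line remark: it is equivalent to $a_0^{[3]}\neq0$, already assumed.
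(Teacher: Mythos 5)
Your overall strategy is the one the paper itself follows: specialize \eqref{J(3)} to the hypotheses \eqref{third-order-J:a3=1}, match coefficients of each $P_{n+j}$ against $J^{(3)}(P_{n+2})=a_{0}^{[3]}P_{n+2}$ to get \eqref{Eq.1}--\eqref{Eq.3} plus the lower-level equations, solve the resulting difference equations with the initial identities for $J^{(1)}(P_{0})$, $J^{(1)}(P_{1})$, $J^{(3)}(P_{0})$, $J^{(3)}(P_{1})$, and verify the converse by substitution; that matches the (largely symbolic) proof in Sections \ref{operator}--\ref{solutions}. However, your stated mechanism for $a_{1}^{[2]}=0$ is wrong: in the identity $a_{1}(x)P_{1}(x)+a_{2}(x)=J^{(1)}(P_{1})=(\lambda_{2}^{[0]}-\lambda_{1}^{[0]})P_{2}+\alpha_{1}(\lambda_{0}^{[0]}-\lambda_{1}^{[0]})P_{0}$ there is no term $\tfrac12 a_{1}^{[2]}x^{2}$ (here $a_{2}(x)$ multiplies $DP_{1}=1$, so it enters linearly), and the $x^{2}$-coefficients match automatically since $\lambda_{2}^{[0]}-\lambda_{1}^{[0]}=a_{1}^{[1]}$. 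What this identity actually gives, via its degree-one coefficient, is $a_{1}^{[2]}=a_{1}^{[1]}(\beta_{0}-\beta_{1})$ (equivalently, $J^{(2)}(P_{0})=a_{2}(x)=B_{1}P_{1}+C_{0}P_{0}$ with $A_{2}=0$ and $B_{1}=a_{1}^{[1]}(\beta_{0}-\beta_{1})$ from \eqref{second-step-coeffs}); so $a_{1}^{[2]}=0$ is \emph{not} a consequence of this single initial identity but follows only after the rest of the system has forced $\beta_{1}=\beta_{0}$. You need to be explicit about that order of deductions (or solve the whole system simultaneously, as the symbolic computation does), otherwise the argument as written is circular: you use ``the constant value of $\beta$'' in an identity whose role is precisely to relate $\beta_{1}$ to $a_{1}^{[2]}$. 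Once $\beta_{n}$ is constant, the constant term of the same identity does give $\alpha_{1}=-a_{0}^{[2]}/(2a_{1}^{[1]})$, consistent with \eqref{alpha-case1}.

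On the converse, the claim that satisfying \eqref{J(3)} ``forces'' the operators $J^{(2)}$, $J^{(1)}$ to act as in \eqref{J^(2)Operator}, \eqref{J^(1)Operator} is not automatic, because those expansions were derived under the eigenfunction hypothesis you are trying to prove. The repair is the upward recursion you hint at: $J^{(3)}(P_{n})=a_{0}^{[3]}P_{n}$ is known a priori, the values $J^{(2)}(P_{0})=a_{2}(x)$, $J^{(2)}(P_{1})=a_{2}(x)P_{1}+a_{3}(x)$, $J^{(1)}(P_{0})=a_{1}(x)$, $J^{(1)}(P_{1})=a_{1}(x)P_{1}+a_{2}(x)$, $J(P_{0})$, $J(P_{1})$, $J(P_{2})$ are computed directly, and the identities \eqref{J^{i}xp_Tx} combined with the four-term recurrence determine $J^{(2)}(P_{n+3})$, then $J^{(1)}(P_{n+3})$, then $J(P_{n+3})$ recursively from lower indices; an induction then shows the closed-form coefficients \eqref{beta-case1}--\eqref{gamma-case1} reproduce exactly $J(P_{n})=\lambda_{n}^{[0]}P_{n}$. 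Phrased this way your converse is sound and coincides in substance with the paper's verification.
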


\medskip

Concerning the assumptions of this last Propostion, it is worth mention that, later on, it is clarified in Proposition \ref{List-subcases} that if $\deg\left(a_{3}(x) \right)=0$, though $a_{3}(x) \neq 0$, and $\deg\left(a_{2}(x) \right) =0$, then $a_{1}^{[1]} \neq 0$.

\medskip

It is also important to remark that the $2$-orthogonal sequence described in Proposition \ref{finalProp-case1}  corresponds to a case, called E, of page 82 of \cite{Douak2classiques}. We then conclude that the single $2$-orthogonal polynomial sequence fulfilling the differential identity described in Proposition \ref{finalProp-case1} is classical in Hahn's sense, which means that the sequence of the derivatives $Q_{n}(x):=\frac{1}{n+1}DP_{n+1}(x)\, , \; n \geq 0 ,$ is also a $2$-orthogonal polynomial sequence. Furthermore, we read in \cite{Douak2classiques} (p. 104) that this sequence is an Appell sequence, in other words,  $Q_{n}(x)=P_{n}(x)\, , \; n \geq 0$. We review this detail while working with the intermediate relations \eqref{J1} and \eqref{J2} along the proof of Proposition \ref{finalProp-case1}, and based on those two identities we may indicate as corollary the following two differential identities.

\begin{corollary}\label{finalCorollary-case1}
Let us consider the $2$-orthogonal polynomial sequence $\{P_{n}\}_{n \geq 0}$ described in Proposition \ref{finalProp-case1} that fulfils
\begin{align*}
&J\left(P_{n}(x) \right) =  \lambda_{n}^{[0]}P_{n}(x) 
\end{align*}
where $J$ is defined by \eqref{operatorJ} with $a_{\nu}(x)=0\, ,\, \nu \geq 4,$ and such that  $a_{0}(x)=a_{0}^{[0]}\, , \, a_{1}(x)=a_{0}^{[1]}+a_{1}^{[1]}x \, , \,a_{1}^{[1]} \neq 0\,, \; $ $a_{2}(x)=a_{0}^{[2]}\, , $ $\, a_{3}(x)=a_{0}^{[3]} \neq 0\, . $
The sequence $\{P_{n}\}_{n \geq 0}$ also fulfils the following two identities
\begin{align*}
&\left( a_{1}(x)I+a_{0}^{[2]}D+\frac{1}{2}a_{0}^{[3]}D^2 \right) \left(  P_{n}(x) \right) = a_{1}^{[1]}P_{n+1}(x)\\
& \hspace{4cm} +\frac{1}{2}n\,a_{0}^{[2]}P_{n-1}(x)+\frac{1}{3} (n-1) n\,a_{0}^{[3]}P_{n-2}(x)\, ,\\
& \\
& DP_{n}(x)= nP_{n-1}(x)\; ,  n \geq 0\; , P_{-1}(x)=0\; .
\end{align*}
\end{corollary}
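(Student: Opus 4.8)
\textbf{Proof proposal for Corollary \ref{finalCorollary-case1}.}

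The plan is to extract the two stated identities directly from the intermediate relations obtained during the proof of Proposition \ref{finalProp-case1}, rather than re-deriving everything from scratch. First I would record the recurrence coefficients as already established in Proposition \ref{finalProp-case1}: namely $\beta_{n} = -a_{0}^{[1]}/a_{1}^{[1]}$ for all $n$, $\alpha_{n+1} = -\frac{a_{0}^{[2]}}{2a_{1}^{[1]}}(n+1)$, $\gamma_{n+1} = -\frac{a_{0}^{[3]}}{6a_{1}^{[1]}}(n+1)(n+2)$, together with $\lambda_{n}^{[0]} = n a_{1}^{[1]} + a_{0}^{[0]}$, so that $\lambda_{n+1}^{[0]} - \lambda_{n}^{[0]} = a_{1}^{[1]}$ is constant and all second differences $\lambda_{n+1}^{[0]} - 2\lambda_{n}^{[0]} + \lambda_{n-1}^{[0]}$ vanish. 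These facts make every bracketed coefficient in \eqref{J^(1)Operator} and \eqref{J^(2)Operator} collapse to something elementary.

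For the first identity I would substitute into \eqref{J^(1)Operator}. The coefficient of $P_{n+1}(x)$ becomes $\lambda_{n+1}^{[0]} - \lambda_{n}^{[0]} = a_{1}^{[1]}$; the coefficient of $P_{n-1}(x)$ becomes $\alpha_{n}(\lambda_{n-1}^{[0]} - \lambda_{n}^{[0]}) = \left(-\frac{a_{0}^{[2]}}{2a_{1}^{[1]}}n\right)(-a_{1}^{[1]}) = \frac{1}{2}n\,a_{0}^{[2]}$; and the coefficient of $P_{n-2}(x)$ becomes $\gamma_{n-1}(\lambda_{n-2}^{[0]} - \lambda_{n}^{[0]}) = \left(-\frac{a_{0}^{[3]}}{6a_{1}^{[1]}}(n-1)n\right)(-2a_{1}^{[1]}) = \frac{1}{3}(n-1)n\,a_{0}^{[3]}$. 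Since $a_{2}(x) = a_{0}^{[2]}$ and $a_{3}(x) = a_{0}^{[3]}$ are constants, the operator $J^{(1)}$ of \eqref{J^{(1)}} reads $J^{(1)}(p) = a_{1}(x)p + a_{0}^{[2]}Dp + \frac{1}{2}a_{0}^{[3]}D^{2}p$, which is exactly the left-hand side of the claimed first identity; matching it against the computed right-hand side finishes that part. One should also check the low-index initial cases $n=0,1$ against the displayed initial identities $a_{1}(x) = J^{(1)}(P_{0})$ and $a_{1}(x)P_{1}(x) + a_{2}(x) = J^{(1)}(P_{1})$, using the convention $P_{-i}(x)=0$, but these are immediate.

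For the second identity, I would note that $DP_{n}(x) = nP_{n-1}(x)$ is precisely the Appell property, and it can be read off from \eqref{J2}/\eqref{J^(2)Operator} in the same spirit: after substituting the coefficients, one finds that $J^{(2)}(P_{n}(x)) = a_{2}(x)P_{n}(x) + a_{3}(x)DP_{n}(x) = a_{0}^{[2]}P_{n}(x) + a_{0}^{[3]}DP_{n}(x)$ by \eqref{J^{(2)}} (again the $a_{\nu}$ are constants), while the right-hand side of \eqref{J^(2)Operator}, with $A_{n+2},B_{n+1},\dots$ evaluated, reduces to $a_{0}^{[2]}P_{n}(x) + a_{0}^{[3]}\,nP_{n-1}(x)$; comparing the two and cancelling $a_{0}^{[3]} \neq 0$ yields $DP_{n}(x) = nP_{n-1}(x)$. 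Alternatively, and perhaps more cleanly, one invokes the remark preceding the corollary — that this is case E of \cite{Douak2classiques} and is an Appell sequence — but the self-contained route through \eqref{J2} is preferable.

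The main obstacle I anticipate is purely bookkeeping: verifying that \emph{all} of the higher coefficients $A_{n+4},\dots,H_{n-4}$ in \eqref{J^(2)Operator}, not just the surviving ones, genuinely vanish under the substituted recurrence coefficients, so that $J^{(2)}(P_{n})$ really does have only the $P_{n}$ and $P_{n-1}$ terms claimed. This amounts to checking that each second difference of $\lambda^{[0]}$ is zero and that the surviving $B$-type term $(\beta_{n-1}-\beta_{n})(\lambda_{n}^{[0]}-\lambda_{n-1}^{[0]})$ vanishes because $\beta_{n}$ is constant — routine, but it is where the argument could hide an error, so I would present that reduction explicitly rather than wave it through.
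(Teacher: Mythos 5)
Your proposal is correct and follows essentially the same route as the paper, which obtains the corollary precisely by substituting the recurrence coefficients of Proposition \ref{finalProp-case1} (with $\lambda_{n}^{[0]}=na_{1}^{[1]}+a_{0}^{[0]}$ linear and $\beta_{n}$ constant) into the intermediate expansions \eqref{J^(1)Operator} and \eqref{J^(2)Operator} and comparing with the explicit forms \eqref{J^{(1)}}--\eqref{J^{(2)}} of $J^{(1)}$ and $J^{(2)}$, the non-vanishing of $a_{0}^{[3]}$ giving $DP_{n}=nP_{n-1}$. One small caution on your closing remark: the vanishing of $G_{n-3}$ is not a consequence of constant $\beta_{n}$ and vanishing second differences of $\lambda^{[0]}$ alone, but of the cancellation $\alpha_{n+3}\gamma_{n+1}=\alpha_{n+1}\gamma_{n+2}$ forced by the explicit formulas \eqref{alpha-case1}--\eqref{gamma-case1}, so the explicit reduction you promise is indeed needed there.
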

\medskip 

In the next proposition, we sum up a list of further conclusions pointed out by the application of the approach detailed in section \ref{sec:k=0 and J third order}. 
\begin{proposition}\label{List-subcases}
Let us consider a $2$-orthogonal polynomial sequence $\{P_{n}\}_{n \geq 0}$ fulfilling
\begin{align*}
&J\left(P_{n}(x) \right) =  \lambda_{n}^{[0]}P_{n}(x)\,, 
\end{align*}
where $J$ is defined by \eqref{operatorJ} with $a_{\nu}(x)=0\, ,\, \nu \geq 4 .$ 
\begin{itemize}
\item[a)] If $a_{2}(x)=a_{0}^{[2]} $ (constant) and $\deg\left( a_{3}(x) \right) \leq 2$, though $a_{3}(x) \neq 0$, then $a_{1}^{[1]} \neq 0 $.
\item[b)] If $a_{2}(x)= 0 $ and $\deg\left( a_{3}(x) \right) =1$, then there isn't a $2$-orthogonal polynomial sequence $\{P_{n}\}_{n \geq 0}$ such that $J\left(P_{n}(x) \right) =  \lambda_{n}^{[0]}P_{n}(x)\, , n \geq 0 $.
\item[c)] If $a_{3}(x)=0$, then the only solution of $J\left(P_{n}(x) \right) =  \lambda_{n}^{[0]}P_{n}(x) $ corresponds to $J=a_{0}^{[1]}D +  a_{0}^{[0]}I $.
\end{itemize}
\end{proposition}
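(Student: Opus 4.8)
\textbf{Proof proposal for Proposition \ref{List-subcases}.}

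The plan is to treat each of the three items by exploiting identity \eqref{J(3)} together with the fact that $J^{(3)}(p)=a_3(x)p$, so that the right-hand side of \eqref{J(3)} is exactly the expansion of $a_3(x)P_{n+2}(x)$ in the basis $\{P_n\}_{n\geq 0}$. Since $\{P_n\}_{n\geq 0}$ is a basis, matching coefficients yields a system of difference equations in $\beta_n,\alpha_n,\gamma_n$ and in the $\lambda_n^{[0]}=\sum_{\mu=0}^n\binom{n}{\mu}a_\mu^{[\mu]}$. For item (a), the hypothesis $a_2(x)=a_0^{[2]}$ and $\deg a_3\leq 2$ forces $a_2^{[2]}=a_3^{[3]}=0$, hence $\lambda_n^{[0]}=a_0^{[0]}+na_1^{[1]}$, which is affine in $n$; I would argue by contradiction, assuming $a_1^{[1]}=0$, so that $\lambda_n^{[0]}$ is the nonzero constant $a_0^{[0]}$. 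Then $J$ is an isomorphism acting on each $P_n$ as multiplication by a fixed scalar, which collapses \eqref{J1} and all the higher-step relations \eqref{J2}, \eqref{J(3)}: every difference $\lambda_{n+i}^{[0]}-\lambda_{n+j}^{[0]}$ vanishes, so \eqref{J1} forces $J^{(1)}(P_{n+2})=0$ for all $n$, hence $J^{(1)}\equiv 0$ on polynomials of degree $\geq 2$; combined with the initial identities for $J^{(1)}(P_0),J^{(1)}(P_1)$ this forces $a_1(x)=a_2(x)=a_3(x)=0$, contradicting $a_3(x)\neq 0$.

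For item (b), with $a_2(x)=0$ and $\deg a_3(x)=1$ we have $a_2^{[2]}=a_3^{[3]}=0$ but $a_1^{[3]}\neq 0$, so again $\lambda_n^{[0]}=a_0^{[0]}+na_1^{[1]}$ is affine. I would extract from \eqref{J(3)} the coefficients of $P_{n+4}$ and $P_{n+3}$ to recover, exactly as in \eqref{Eq.1}--\eqref{Eq.2}, the recursions $\beta_{n+4}-2\beta_{n+3}+\beta_{n+2}=0$ and an equation for the $\alpha$'s; the coefficient of $P_{n+2}$ now produces an equation of the form $-3a_1^{[1]}(\gamma_{n+1}-2\gamma_{n+2}+\gamma_{n+3})=(\text{coefficient of }x^2\text{ in }a_3)=0$ because $a_2^{[3]}=0$ here, whereas the coefficient of $P_{n+1}$ (or the next lower one) will carry the surviving term $a_1^{[3]}$. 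The strategy is to show that the resulting over-determined system is inconsistent: tracking how the single nonzero datum $a_1^{[3]}$ must appear on the right-hand side of $a_3(x)P_{n+2}(x)=a_1^{[3]}\,xP_{n+2}(x)+a_0^{[3]}P_{n+2}(x)$ and comparing with the expansion forced by \eqref{J(3)} leads to a contradiction with the regularity condition $\gamma_{n+1}\neq 0$ (equivalently $\lambda_{n+k}^{[k]}\neq 0$). Here one must also use the constraint $a_1^{[1]}\neq 0$, which is not assumed but follows from item (a) applied with $\deg a_3=1\leq 2$ (noting $a_2(x)=0$ is the constant case $a_0^{[2]}=0$).

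For item (c), $a_3(x)=0$ makes $J$ a second-order operator, $J=a_0(x)I+a_1(x)D+\tfrac12 a_2(x)D^2$, and the analysis reduces to the known second-order theory: one can either invoke the degree-comparison in \eqref{J2}--\eqref{J(3)} with $J^{(3)}\equiv 0$, $J^{(2)}(p)=a_2(x)p$, and run the same coefficient-matching one level down, or argue directly that a $2$-orthogonal sequence cannot be an eigenbasis of a genuinely second-order operator unless that operator degenerates to first order. I would match the top coefficients in the $J^{(2)}$-version of \eqref{J(3)} to force $a_2^{[2]}=0$, then $a_1^{[2]}=0$ and $a_0^{[2]}=0$ successively, and finally use the initial identities $J(P_0)=a_0^{[0]}$, $J(P_1)=\lambda_1^{[0]}P_1$ together with $J^{(1)}(P_0)=a_1(x)$ to pin down $a_1^{[1]}=0$ and $a_1(x)=a_0^{[1]}$, leaving $J=a_0^{[1]}D+a_0^{[0]}I$; the fact that this $J$ indeed admits a $2$-orthogonal (in fact any) Appell-type eigenbasis is immediate since $D$ does.

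The main obstacle is item (b): unlike (a) and (c), where a clean degeneration argument closes the case, (b) requires carefully bookkeeping which of the lower-order coefficients of \eqref{J(3)} the lone parameter $a_1^{[3]}$ contaminates, and showing the system it imposes on $(\beta_n,\alpha_n,\gamma_n)$ has no solution compatible with the regularity conditions $\gamma_{n+1}\neq 0$; this is where the bulk of the computation lies, and where one must be careful not to accidentally allow a spurious solution by ignoring one of the difference equations produced by the lower-degree $P_{n-j}$ coefficients.
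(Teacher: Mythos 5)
Your overall plan---matching the expansion \eqref{J(3)} (and its lower-level analogues \eqref{J^(1)Operator}, \eqref{J^(2)Operator}) against the closed forms $J^{(3)}(p)=a_{3}(x)p$, $J^{(2)}(p)=a_{2}(x)p+a_{3}(x)p'$, and exploiting the regularity $\gamma_{n}\neq 0$---is exactly the method the paper relies on, and your argument for item a) is correct and essentially complete: under the stated hypotheses $a_{2}^{[2]}=a_{3}^{[3]}=0$, so $a_{1}^{[1]}=0$ would make $\lambda_{n}^{[0]}$ constant, \eqref{J1} then gives $J^{(1)}(P_{n+2})=0$, the initial identities give $a_{1}=a_{2}=0$, and $J^{(1)}(P_{2})=a_{3}(x)=0$, a contradiction; your use of a) to justify $a_{1}^{[1]}\neq 0$ in b) is also legitimate.

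Items b) and c), however, contain genuine gaps. For b) you only state a strategy, and the difference equations you record are not the right ones: since $a_{3}(x)P_{n+2}=a_{1}^{[3]}T_{x}(P_{n+2})+a_{0}^{[3]}P_{n+2}$, the parameter $a_{1}^{[3]}$ already enters at the $P_{n+3}$ level, turning \eqref{Eq.2} into the inhomogeneous equation $a_{1}^{[1]}\left((\beta_{n+2}-\beta_{n+3})^{2}-2\alpha_{n+2}+4\alpha_{n+3}-2\alpha_{n+4}\right)=a_{1}^{[3]}$, while the $P_{n+2}$ coefficient gives $-3a_{1}^{[1]}(\gamma_{n+1}-2\gamma_{n+2}+\gamma_{n+3})=a_{1}^{[3]}\beta_{n+2}+a_{0}^{[3]}$, not $0$ as you claim, and $a_{1}^{[3]}$ also contaminates the $P_{n+1}$ and $P_{n}$ coefficients through $\alpha_{n+2}$ and $\gamma_{n+1}$. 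The nonexistence assertion is precisely the incompatibility of this corrected system with $\gamma_{n}\neq 0$; for instance, with $\beta_{n}$ eventually affine of slope $c$, the $P_{n+1}$ coefficient reduces to $-6a_{1}^{[1]}c(\gamma_{n+2}-\gamma_{n+1})=2a_{1}^{[3]}\alpha_{n+2}$, so $c=0$ forces $\alpha_{n}$ eventually zero and contradicts the inhomogeneous $\alpha$-equation, while $c\neq 0$ requires pushing to the $P_{n}$ and lower coefficients---none of this is carried out, so the heart of b) is missing. For c), the mechanism you describe would fail: matching the \emph{top} coefficients of $J^{(2)}(P_{n})=a_{2}(x)P_{n}$ is vacuous (the $P_{n+2}$ coefficient reads $a_{2}^{[2]}=a_{2}^{[2]}$); it is the \emph{lowest} coefficients, weighted by products of the $\gamma$'s, that force the degeneracy. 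For example the $P_{n-4}$ coefficient yields $4a_{2}^{[2]}\gamma_{n-1}\gamma_{n-3}=a_{2}^{[2]}\gamma_{n-1}\gamma_{n-3}$, hence $a_{2}^{[2]}=0$ by regularity; the $P_{n+1}$ and $P_{n-2}$ coefficients then combine to give $a_{1}^{[2]}=0$; the $P_{n-1}$, $P_{n}$ and $P_{n-3}$ coefficients force $a_{0}^{[2]}=0$; and finally the $P_{n-2}$ coefficient of $J^{(1)}(P_{n})=a_{1}(x)P_{n}$ gives $-2a_{1}^{[1]}\gamma_{n-1}=a_{1}^{[1]}\gamma_{n-1}$, so $a_{1}^{[1]}=0$. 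So while your proposal is in the paper's spirit, as written b) is asserted rather than proved and c) rests on coefficient identifications that do not carry the argument.
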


Applying  the identities shown in sections \ref{operator} and \ref{sec:k=0 and J third order}, we are able to prove the forthcoming results. In particular we bring to light the $2$-orthogonal sequence defined in Corollary \ref{finalCor-case2}, as well as other differential identities fulfilled by a $2$-orthogonal solution besides the prefixed one $J\left(P_{n}(x) \right) =  \lambda_{n}^{[0]}P_{n}(x) \,,$ $\, n \geq 0\,.$ 

\medskip

In Theorem \ref{finalProp-case2}, we find the description of the $2$-orthogonal sequence that is the solution of the problem posed with respect to the third order operator $J$ defined by the conditions $a_{2}(x)=0$ and $\deg\left( a_{3}(x) \right) \leq 2$. Taking into consideration Proposition \ref{List-subcases}, we have assured that $\deg\left( a_{1}(x) \right)=1$, or $a_{1}^{[1]} \neq 0$.

\begin{theorem}\label{finalProp-case2}
Let us consider a $2$-orthogonal polynomial sequence $\{P_{n}\}_{n \geq 0}$ fulfilling
\begin{align*}
&J\left(P_{n}(x) \right) =  \lambda_{n}^{[0]}P_{n}(x) \, ,\, n \geq 0\,,
\end{align*}
where $J$ is defined by \eqref{operatorJ} with $a_{\nu}(x)=0\, ,\, \nu \geq 4,$ and such that $a_{0}(x)=a_{0}^{[0]}\, , \, a_{1}(x)=a_{0}^{[1]}+a_{1}^{[1]}x \, , \,a_{1}^{[1]} \neq 0\,, \; $ $a_{2}(x)=0\, , $ $\, a_{3}(x)=a_{0}^{[3]} +a_{1}^{[3]} x+a_{2}^{[3]} x^2\, . $
\par Then the recurrence coefficients of the sequence $\{P_{n}\}_{n \geq 0}$ are the following and the coefficients of the polynomial $a_{3}(x)= a_{2}^{[3]}x^2+a_{1}^{[3]}x+a_{0}^{[3]}$ fulfil $$(a_{1}^{[3]})^2-4a_{2}^{[3]}a_{0}^{[3]}=0.$$
\begin{align}
\label{beta-case2}& \beta_{n}= -\frac{a_{2}^{[3]}}{2 a_{1}^{[1]}}(n-1)n  - \frac{a_{0}^{[1]}}{a_{1}^{[1]}}\, , \, n \geq 0 \, ,\\
\label{alpha-case2}& \alpha_{n} = -\frac{a_{1}^{[3]}}{2 a_{1}^{[1]}} + \frac{a_{0}^{[1]}a_{2}^{[3]}}{(a_{1}^{[1]})^2}  +(n-2) \left( -\frac{3a_{1}^{[3]}}{4 a_{1}^{[1]}} + \frac{a_{2}^{[3]}\left( 9a_{0}^{[1]}+a_{2}^{[3]}\right) }{6(a_{1}^{[1]})^2} \right)\\
\nonumber & \hspace{0.6cm} + (n-2)^{2} \left(b_{0}+b_{1}(n-2)+b_{2}(n-2)^{2}\right)  \, , \, n \geq 1 \, ,\\
\label{gamma-case2}& \gamma_{n} =-\dfrac{1}{3a_{1}^{[1]}}\left(  a_{0}^{[3]} +\dfrac{a_{0}^{[1]}\left(  -a_{1}^{[1]}a_{1}^{[3]} + a_{0}^{[1]}a_{2}^{[3]}\right)}{(a_{1}^{[1]})^2}   \right) \\
\nonumber & \hspace{0.6cm} - (n-1) \left( \dfrac{(a_{1}^{[1]})^2 a_{0}^{[3]}-a_{0}^{[1]}a_{1}^{[1]}a_{1}^{[3]} + (a_{0}^{[1]})^2\, a_{2}^{[3]} }{2(a_{1}^{[1]})^3}\right)    \\
\nonumber & \hspace{0.6cm} + (n-1)^2 \left(  f_{0}+f_{1}(n-1) + f_{2}(n-1)^2 +f_{3}(n-1)^3 + f_{4}(n-1)^4 \right)\, , \, n \geq 1 \,;
\end{align}
where
\begin{align*}
&f_{0} = \frac{-18 a^{[3]}_{0} (a^{[1]}_{1})^2+6 a^{[3]}_{1}a^{[1]}_{1} \left(3 a^{[1]}_{0}+a^{[3]}_{2}\right) +a^{[3]}_{2} \left(-18 (a^{[1]}_{0})^2-12 a^{[3]}_{2} a^{[1]}_{0}+(a^{[3]}_{2})^2 \right)}{108 (a^{[1]}_{1})^3} \,,\\
& f_{1} = \frac{a^{[3]}_{2} \left(6 a^{[1]}_{1} a^{[3]}_{1}+a^{[3]}_{2} \left(a^{[3]}_{2}-12 a^{[1]}_{0}\right)\right)}{72 (a^{[1]}_{1})^{3}} \,,\\
& f_{2} = -\frac{a^{[3]}_{2} \left(a^{[3]}_{2} \left(12 a^{[1]}_{0}+a^{[3]}_{2}\right)-6 a^{[1]}_{1} a^{[3]}_{1}\right)}{216 (a^{[1]}_{1})^3 } \,,\\
& f_{3} = -\frac{(a^{[3]}_{2})^3}{72 (a_{1}^{[1]}) ^{3}} \,,\\
& f_{4} = -\frac{(a^{[3]}_{2})^3}{216 (a^{[1]}_{1})^3} \,,
\end{align*}
\begin{align*}
&b_{0}= \frac{1}{2} \left( - \frac{a_{1}^{[3]}}{2 a_{1}^{[1]}}  +  \frac{a_{0}^{[1]} a_{2}^{[3]}}{(a_{1}^{[1]})^2} + \frac{10\, (a_{2}^{[3]})^2}{12 \, (a_{1}^{[1]})^2}\right)\, , \\
&b_{1}= \frac{(a_{2}^{[3]})^2}{3 \, (a_{1}^{[1]})^2} \, , \\
&b_{2}=  \frac{(a_{2}^{[3]})^2}{12 \, (a_{1}^{[1]})^2} \, .
\end{align*}
Conversely, the $2$-orthogonal polynomial sequence $\{P_{n}\}_{n \geq 0}$ defined by the recurrence coefficients \eqref{beta-case2}-\eqref{gamma-case2}, under the assumption $\gamma_{n} \neq 0\, , \, n\geq1\,,$ fulfils the differential equation 
$J\left(P_{n}(x) \right) =  \lambda_{n}^{[0]}P_{n}(x) \, , \, n \geq 0\, ,$
where  $a_{0}(x)=a_{0}^{[0]}\, , \, a_{1}(x)=a_{0}^{[1]}+a_{1}^{[1]}x \, , \,a_{1}^{[1]} \neq 0\,, \; $ $a_{2}(x)=0\, , $ $a_{3}(x)= a_{2}^{[3]}x^2+a_{1}^{[3]}x+a_{0}^{[3]}$ with $(a_{1}^{[3]})^2-4a_{2}^{[3]}a_{0}^{[3]}=0,$
 and $a_{\nu}(x)=0\, ,\, \nu \geq 4$. 
\end{theorem}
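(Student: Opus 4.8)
The plan is to exploit identity \eqref{J(3)}, which expands $J^{(3)}(P_{n+2})$ in the basis $\{P_{k}\}$, together with the elementary fact $J^{(3)}(p)=a_{3}(x)p$ with $a_{3}(x)=a_{2}^{[3]}x^{2}+a_{1}^{[3]}x+a_{0}^{[3]}$ (since $a_{2}(x)=0$, the middle-derivative term vanishes). Multiplying $P_{n+2}$ by $a_{3}(x)$ and re-expanding using \eqref{operatorTx} twice more gives a second explicit expansion of $J^{(3)}(P_{n+2})$ in the same basis, running from $P_{n+6}$ down to $P_{n-2}$. Equating the two expansions coefficient-by-coefficient, and using the linear independence of $\{P_{n}\}_{n\ge 0}$, produces a family of difference equations in the unknown sequences $(\beta_{n})$, $(\alpha_{n})$, $(\gamma_{n})$, whose coefficients involve only $a_{0}^{[1]},a_{1}^{[1]},a_{0}^{[3]},a_{1}^{[3]},a_{2}^{[3]}$ and $\lambda_{n}^{[0]}=na_{1}^{[1]}+a_{0}^{[0]}$ (here I use that, with $a_{2}(x)=0$ and $\deg a_{3}\le 2$, we have $a_{3}^{[3]}=0$, so $\lambda_{n}^{[0]}$ is affine in $n$; this also forces the leading-coefficient equations $A_{n}=\lambda^{[0]}_{n}-2\lambda^{[0]}_{n-1}+\lambda^{[0]}_{n-2}=0$, simplifying \eqref{J(3)} considerably).

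Concretely, I would proceed in descending order of the index. The top coefficient ($P_{n+6}$) is automatically matched since both sides have leading symbol $a_{2}^{[3]}$. The coefficient of $P_{n+5}$ (after using $A_{n}\equiv 0$, which kills the $P_{n+5}$ term on the left of \eqref{J(3)} and leaves $B$-type terms) yields a second-order constant-coefficient difference equation for $(\beta_{n})$ whose forcing term is proportional to $a_{2}^{[3]}$; solving it with the known initial values $\beta_{0},\beta_{1}$ (computable from $J(P_{0})=\lambda_{0}^{[0]}P_{0}$, $J(P_{1})=\lambda_{1}^{[0]}P_{1}$) gives the quadratic-in-$n$ formula \eqref{beta-case2}. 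Substituting this $\beta_{n}$ into the next equation (coefficient of $P_{n+4}$) gives a difference equation for $(\alpha_{n})$ whose general solution is a quartic polynomial in $n$; matching initial data $\alpha_{1},\alpha_{2}$ pins down \eqref{alpha-case2} with the stated coefficients $b_{0},b_{1},b_{2}$. Then the coefficient of $P_{n+3}$, with $\beta_{n}$ and $\alpha_{n}$ now known, determines $(\gamma_{n})$ as a polynomial of degree six in $n$ (degree $5$ in $(n-1)$ after the even/odd bookkeeping), giving \eqref{gamma-case2} with the coefficients $f_{0},\dots,f_{4}$. Finally, the remaining lower equations (coefficients of $P_{n+2}$ down to $P_{n-2}$) must be checked for consistency; one of them will not be automatically satisfied and will impose exactly the algebraic constraint $(a_{1}^{[3]})^{2}-4a_{2}^{[3]}a_{0}^{[3]}=0$ on the parameters (equivalently, $a_{3}(x)$ is a perfect square), which is the key rigidity statement. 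The converse direction is then obtained by reversing these steps: starting from $(\beta_{n}),(\alpha_{n}),(\gamma_{n})$ as defined, verifying that all the difference equations coming from \eqref{J(3)} hold identically under the constraint, hence $J(P_{n})=\lambda_{n}^{[0]}P_{n}$.

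I expect the main obstacle to be the sheer algebraic bulk: identity \eqref{J(3)} already involves seven nontrivial coefficient expressions $A,\dots,H$, each quadratic in the recurrence data, and after substituting the guessed polynomial forms of $\beta_{n},\alpha_{n},\gamma_{n}$ one must verify that large polynomial-in-$n$ expressions vanish identically or reduce to the single constraint $(a_{1}^{[3]})^{2}=4a_{2}^{[3]}a_{0}^{[3]}$. This is exactly the point where the symbolic transformation scheme $G_{k}(n),I_{k}(n),M(n)$ described in Section \ref{sec:k=0 and J third order} is invoked: the manipulations are carried out in computer algebra, with the increments on $n$ handled symbolically, and the human content is (i) recognizing the correct ansatz (polynomial in $n$ of the predicted degree) for each sequence, (ii) reading off the initial conditions from the low-degree identities $J(P_{0}),J(P_{1})$ listed after \eqref{J(3)}, and (iii) isolating which of the lower-order matching equations is responsible for the discriminant constraint. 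A secondary subtlety is the regularity hypothesis $\gamma_{n}\neq 0$ in the converse, needed so that the constructed $\{P_{n}\}$ is genuinely $2$-orthogonal (via Theorem \ref{recurrence relation for d ortho}) rather than merely satisfying a four-term recurrence; this is stated as an explicit assumption and requires no proof, only to be carried along.
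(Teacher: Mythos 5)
Your proposal is correct and follows essentially the same route as the paper: the paper's (largely computational) argument likewise equates the expansion \eqref{J(3)} of $J^{(3)}\left(P_{n+2}\right)$ with the direct expansion of $a_{3}(x)P_{n+2}(x)$ obtained from the recurrence \eqref{operatorTx}, extracts the resulting difference equations, solves them using the initial data coming from $J(P_{0})=\lambda^{[0]}_{0}P_{0}$, $J(P_{1})=\lambda^{[0]}_{1}P_{1}$, and obtains the constraint $(a_{1}^{[3]})^{2}-4a_{2}^{[3]}a_{0}^{[3]}=0$ from the remaining consistency conditions, the bulk being handled by the symbolic transformations $G_{k}(n)$, $I_{k}(n)$, $M(n)$. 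Only a minor bookkeeping slip: since $\deg a_{3}\le 2$, the product $a_{3}(x)P_{n+2}$ spans $P_{n+4}$ down to $P_{n-2}$ (not $P_{n+6}$), so the equations for $\beta_{n}$, $\alpha_{n}$, $\gamma_{n}$ come from the coefficients of $P_{n+4}$, $P_{n+3}$, $P_{n+2}$ respectively.
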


\medskip

The content of Theorem \ref{finalProp-case2} provides an entire solution written in terms of the polynomial coefficients of the operator $J$. In the next Corollary we read a specific case endowed with Hahn's property, as we may prove analytically using the functionals of the dual sequence.

\begin{corollary}\label{finalCor-case2}
Let us consider the $2$-orthogonal polynomial sequence $\{P_{n}\}_{n \geq 0}$ fulfilling
\begin{align*}
&J\left(P_{n}(x) \right) =  \lambda_{n}^{[0]}P_{n}(x) \, ,\, n \geq 0\,,
\end{align*}
where $J$ is defined by \eqref{operatorJ} with $a_{\nu}(x)=0\, ,\, \nu \geq 4,$ and such that $a_{0}(x)=a_{0}^{[0]}\, , \, a_{1}(x)=\frac{1}{24}x \,, \; $ $a_{2}(x)=0\, , $ $\, a_{3}(x)=(x-1)^2\, . $
\par Then the recurrence coefficients of the sequence $\{P_{n}\}_{n \geq 0}$ are the following.
\begin{align}
\label{beta-case2.1}& \beta_{n}= -12(n-1)n \, , \, n \geq 0 \, ,\\
\label{alpha-case2.1}& \alpha_{n} = 12 (n-1) n (2 n-3)^2 \, , \, n \geq 1 \, ,\\
\label{gamma-case2.1}& \gamma_{n} = -4n(n+1)(2n-3)^2 (2n-1)^2 \, , \, n \geq 1.
\end{align}
Conversely, the $2$-orthogonal polynomial sequence $\{P_{n}\}_{n \geq 0}$ defined by the recurrence coefficients \eqref{beta-case2.1}-\eqref{gamma-case2.1} fulfils the differential equation 
$$\left( \frac{1}{6}(x-1)^2 D^3+\frac{1}{24}xD +a_{0}^{[0]} I\right) \left(P_{n}(x) \right) =  \lambda_{n}^{[0]}P_{n}(x) \, , \, n \geq 0\, ,$$
where  $\lambda_{n}^{[0]}=\frac{1}{24}n+  a_{0}^{[0]}  \, , \, n \geq 0\, $. 
\end{corollary}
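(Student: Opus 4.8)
\textbf{Proof plan for Corollary \ref{finalCor-case2}.}

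The plan is to obtain this corollary as a direct specialization of Theorem \ref{finalProp-case2}, followed by a short independent verification of Hahn's property. First I would observe that the data $a_{1}(x)=\frac{1}{24}x$, $a_{2}(x)=0$, $a_{3}(x)=(x-1)^{2}=x^{2}-2x+1$ fit the hypotheses of Theorem \ref{finalProp-case2} with $a_{0}^{[1]}=0$, $a_{1}^{[1]}=\frac{1}{24}$, $a_{2}^{[3]}=1$, $a_{1}^{[3]}=-2$, $a_{0}^{[3]}=1$; and that the compatibility constraint $(a_{1}^{[3]})^{2}-4a_{2}^{[3]}a_{0}^{[3]}=4-4=0$ is satisfied, so the operator is admissible. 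The formula $\lambda_{n}^{[0]}=na_{1}^{[1]}+a_{0}^{[0]}=\frac{1}{24}n+a_{0}^{[0]}$ is then immediate, as is the rewriting $\frac{a_{3}^{[3]}}{3!}D^{3}=\frac16(x-1)^{2}D^{3}$ with the $D^{2}$-term absent since $a_{2}(x)=0$.

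Next I would substitute the above values of the five constants into the closed-form expressions \eqref{beta-case2}, \eqref{alpha-case2}, \eqref{gamma-case2} together with the auxiliary coefficients $b_{0},b_{1},b_{2}$ and $f_{0},\dots,f_{4}$. For $\beta_{n}$ this is a one-line computation: $\beta_{n}=-\frac{a_{2}^{[3]}}{2a_{1}^{[1]}}(n-1)n-\frac{a_{0}^{[1]}}{a_{1}^{[1]}}=-12(n-1)n$. For $\alpha_{n}$ and $\gamma_{n}$ the substitution produces, respectively, a degree-four and a degree-six polynomial in $n$; the claim is that these collapse to $12(n-1)n(2n-3)^{2}$ and $-4n(n+1)(2n-3)^{2}(2n-1)^{2}$. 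I would carry out the substitution term by term, collect powers of $(n-2)$ (resp.\ $(n-1)$), and check the resulting polynomial identity either by expanding both sides or by evaluating at enough points; since $a_{0}^{[1]}=0$ several terms in $b_{0}$, $f_{0}$, $f_{1}$, $f_{2}$ vanish, which keeps the algebra manageable. The converse direction is then automatic: it is exactly the converse half of Theorem \ref{finalProp-case2}, whose hypothesis $\gamma_{n}\neq 0$ for $n\geq 1$ holds here because $-4n(n+1)(2n-3)^{2}(2n-1)^{2}\neq 0$ for every $n\geq 1$ (the factors $2n-3$ and $2n-1$ are odd, hence nonzero).

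Finally, for the assertion that this sequence enjoys Hahn's property I would argue at the level of the dual sequence, as the statement announces (``as we may prove analytically using the functionals of the dual sequence''). The standard route is: set $Q_{n}(x):=\frac{1}{n+1}DP_{n+1}(x)$ and show $\{Q_{n}\}_{n\ge0}$ is again $2$-orthogonal; equivalently, show that the forms $v_{0},v_{1}$ dual to $\{Q_{n}\}$ (related to $u_{0},u_{1}$ by $D v_{j}$ proportional to a combination of the $u_{k}$'s) satisfy a first-order functional (Pearson-type) system with polynomial coefficients of the appropriate degrees, which forces $2$-orthogonality of $\{Q_{n}\}$ via Theorem \ref{recurrence relation for d ortho} applied to the (computable) recurrence coefficients of $\{Q_{n}\}$. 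In practice I would derive the recurrence coefficients $\tilde\beta_{n},\tilde\alpha_{n},\tilde\gamma_{n}$ of $\{Q_{n}\}$ directly from \eqref{beta-case2.1}--\eqref{gamma-case2.1} (they are rational shifts of the original ones) and confirm they again have polynomial form of the same type, so that $\{Q_{n}\}$ satisfies the $2$-orthogonality recurrence of Theorem \ref{recurrence relation for d ortho}.

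The main obstacle is the middle step: verifying that the generic degree-four and degree-six polynomial formulas for $\alpha_{n}$ and $\gamma_{n}$ in Theorem \ref{finalProp-case2} genuinely simplify, after this particular substitution, to the compact factored forms $12(n-1)n(2n-3)^{2}$ and $-4n(n+1)(2n-3)^{2}(2n-1)^{2}$. This is purely a polynomial-identity check but it is the place where an algebraic slip is most likely; I would guard against it by expanding both the claimed factored form and the substituted formula into the monomial basis $\{1,n,n^{2},\dots\}$ and comparing coefficients, and independently by numerically evaluating both sides at $n=1,2,\dots,7$ (more than the degree), which pins the identity down.
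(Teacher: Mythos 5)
Your proposal is correct and follows essentially the same route as the paper, which obtains Corollary \ref{finalCor-case2} precisely by specializing Theorem \ref{finalProp-case2} to $a_{0}^{[1]}=0$, $a_{1}^{[1]}=\tfrac{1}{24}$, $a_{3}(x)=(x-1)^{2}$ (the constraint $(a_{1}^{[3]})^{2}-4a_{2}^{[3]}a_{0}^{[3]}=0$ holding), substituting into \eqref{beta-case2}--\eqref{gamma-case2}, and invoking the converse half with $\gamma_{n}\neq 0$. Your additional dual-sequence verification of Hahn's property goes beyond what the stated corollary requires, but it is consistent with the paper's surrounding remarks and does no harm.
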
 
Furthermore, we remark that the polynomial sequence $\{P_{n}\}_{n \geq 0}$ defined by \eqref{beta-case2.1}-\eqref{gamma-case2.1}, fulfils the following two differential relations obtained by \eqref{J1} and \eqref{J2}.
\begin{align}
& \left( \frac{1}{24}xI+\frac{1}{2}(x-1)^2 D^2\right)(P_{n}(x)) = \frac{1}{24}P_{n+1}(x)\\
\nonumber &-\frac{1}{2}(3-2n)^{2}(n-1)nP_{n-1}(x)+ \frac{1}{3}(n-1)n(15-16n+4n^2)^2P_{n-2}(x)\, ,\\
\nonumber & \\
& (x-1)^2 D(P_{n}(x)) = nP_{n+1}(x) -2n(5+4n(2n-3))P_{n}(x)  \\
\nonumber & + (3-2 n)^2 n (24 (n-2) n+25) P_{n-1}(x) -8 (5-2 n)^2 (n-1) n (2 n-3)^3 P_{n-2}(x) \\
\nonumber & + 4 (3-2 n)^2 (5-2 n)^2 (7-2 n)^2 (n-2) (n-1) n P_{n-3}(x) \, , \, n \geq 0 \,,\, P_{-i}(x)=0\,.
\end{align}

\section*{Acknowledgements}

\noindent  This work was partially supported by
 CMUP (UIDB/00144/2020), which is funded by FCT (Portugal).


\end{document}